\numberwithin{equation}{section}
\numberwithin{table}{section}
\numberwithin{figure}{section}
\newtheorem{definition}{Definition}
\newtheorem{theorem}{Theorem}
\newtheorem{lemma}{Lemma}
\newtheorem{remark}{Remark}
\newtheorem{condition}{Condition}
\newtheorem{assumption}{Assumption}
\def\Df{\mathop{D}}
\def\Ds{\mathop{D^2}}
\newcommand{\eqn}{\begin{eqnarray}}
\newcommand{\eqnd}{\end{eqnarray}}
\newcommand{\fpd}[2]{\frac{\partial #1}{\partial #2}}
\newcommand{\spd}[2]{\frac{\partial^2 #1}{\partial {#2}^2}}
\newcommand{\vf}[1]{{\bf#1}} 
\newcommand{\R}{\mathbb{R}}
\newcommand{\argmin}[1]{\rm{argmin}}
\def\harf{\hbox{$\frac{1}{2}$}}
\begin{document}


\title{
Piecewise Constant Policy Approximations to Hamilton-Jacobi-Bellman Equations
}
\author{
C.~Reisinger\thanks{Mathematical Institute, Andrew Wiles Building, University of Oxford, Woodstock Road, Oxford, OX2 6GG, 
{\tt reisinge@maths.ox.ac.uk}}
~and P.A.~Forsyth\thanks{Cheriton School of Computer Science, University of Waterloo, Waterloo ON, Canada N2L 3G1, {\tt paforsyt@uwaterloo.ca}}
}

\maketitle

\begin{abstract}
An advantageous feature of  piecewise constant policy timestepping
for  Hamilton-Jacobi-Bellman (HJB) equations is that 
different linear approximation schemes, and indeed different 
meshes, can be used for the resulting linear equations 
for different control parameters.
Standard convergence analysis suggests that monotone (i.e., linear)
interpolation must be used to transfer data between meshes.
Using the equivalence to a switching system and an adaptation of the usual 
arguments based on consistency, stability and monotonicity,
we show that if limited, potentially higher order 
interpolation is used for the mesh transfer, convergence is guaranteed.
We provide numerical tests for the mean-variance optimal 
investment problem and the uncertain volatility option 
pricing model, and compare the results to published test cases.
\end{abstract}

\noindent
{\bf Key words:} fully nonlinear PDEs, monotone 
approximation schemes, piecewise constant policy time
stepping, viscosity solutions, uncertain volatility model,
mean variance \\

\noindent
{\bf AMS subject classification: 65M06, 65M12,  90C39,  49L25,  93E20}

\section{Introduction}

This article is concerned with the numerical approximation 
of fully nonlinear second order partial differential equations of the form
\begin{eqnarray}
0 \; = \; F({\bf x},V,DV,D^2V) \!\!&=\!\!&
\left\{
\begin{array}{rl}
V_{\tau} - \sup_{q\in Q} L_q V, &~~ {\bf x} \in \R^{d} \times 
                                                    (0,T], \\
                             
                              V( {\bf x} ) - \mathcal{G}( {\bf x} ), & ~~
                                       {\bf x} \in \R^{d} \times
                                       \{0 \},
\end{array}
\right.
\label{hjbintro}
\end{eqnarray}
where ${\bf x}=(S,\tau) $ contains 
both `spatial' coordinates $S\in \R^{d}$ and 
{\em backwards time} $\tau$. For fixed $q$ in a control set $Q$, $L_q$ is the linear differential operator
\begin{eqnarray}
\label{linop}
L_q V = \text{tr} \big(\sigma_q \sigma_q^T D^2 V\big) + \mu_q^T D V - r_q V + f_q,
\end{eqnarray}
where $\sigma_q \in \R^{d \times d}$, $\mu_q \in \R^{d}$ and $r_q, f_q \in \R$ are functions of the control as well as possibly of $\bf x$.
An initial (in backwards time) condition $V(0, \cdot) = \mathcal{G}(\cdot)$ is also specified.

These equations arise naturally from stochastic optimization problems. By \emph{dynamic programming}, the value function satisfies
an HJB equation of the form (\ref{hjbintro}).  Since dynamic programming works
backwards in time from a terminal time $T$ to today $t=0$, it is conventional to
write PDE (\ref{hjbintro}) in terms of backwards time $\tau = T-t$, with $T$ being the terminal
time, and $t$ being forward time.

Many examples of equations of the type (\ref{hjbintro}) are found in financial mathematics, 
including the following:
optimal investment problems \cite{merton69};
transaction cost problems \cite{davis1990};
optimal trade execution problems \cite{almgren2001};
values of American options \cite{karatzas88};
models for financial derivatives under uncertain volatilities \cite{avellaneda95, lyons95};
utility indifference pricing of financial derivatives \cite{carmona09}.
More recently, enhanced oversight of the financial system has resulted
in reporting requirements which include Credit Value Adjustment (CVA)
and Funding Value Adjustment (FVA), which lead to nonlinear  control problems of the form (\ref{hjbintro})
\cite{burgard2011,mercurio2013,burgard2013}.

If the solution has sufficient regularity, specifically for Cordes coefficients, it has recently been demonstrated that
higher order discontinuous Galerkin solutions are possible \cite{smears2014}.
Generally, however, these problems have solutions only in the viscosity sense of \cite{usersguide}.

A general framework for the convergence analysis of 
discretization schemes for strongly nonlinear degenerate elliptic 
equations of type (\ref{hjbintro}) is introduced in \cite{barlessouganidis}, and has since been refined
to give error bounds and convergence orders, see, e.g., \cite{barlesjakobsen02,barlesjakobsen05,barlesjakobsen07}.
The key requirements that ensure convergence are consistency, stability and monotonicity of the discretization.

The standard approach to solve (\ref{hjbintro}) by finite 
difference schemes is to ``discretize, then optimize'', i.e., 
to discretize the derivatives in (\ref{linop})
and to solve the resulting finite-dimensional control problem.
The nonlinear discretized equations are then often
solved using variants of policy iteration \cite{forsythlabahn},
also known as Howard's algorithm and equivalent to Newton's
iteration under common conditions \cite{bokanowski}.

At each step of policy iteration, it is necessary to find
the globally optimal policy (control) at each computational node.
The PDE coefficients may be sufficiently complicated functions of the control
variable $q$ such that the global optimum cannot be found either analytically
or by standard optimization algorithms.
Then, often the only way to guarantee convergence
of the algorithm is to discretize the admissible control
set and determine the optimal control at each node by
exhaustive search, i.e., $Q$ is approximated by 
finite subset
$Q_H=\{q_1,\ldots q_J\} \subset Q$.
This step is the most computationally
time intensive part of the entire algorithm.
Convergence to
the exact solution is obtained by refining $Q_H$.

Of course, in many practical problems, the admissible 
set is known to be of {\em bang-bang} type, i.e., the
optimal controls are a finite subset of the admissible
set.  Then the true admissible set is already
a discrete set of the form $Q_H$.

In both cases, if we use
backward Euler timestepping, an approximation to $V^{n+1}$ at time $\tau^{n+1}$ is obtained from
\begin{eqnarray}
\frac{V^{n+1}-V^n}{\Delta \tau} - \max_{q_j \in Q_H} L^h_{q_j} V^{n+1}  = 0,
\label{pde_problem_1}
\end{eqnarray}
where we have a spatial discretization $L^h_{q_j}$,
with $h$  a mesh size and $\Delta \tau$ the timestep.


\subsection{Objectives}
It is our experience that many industrial practitioners find
it difficult and time consuming to implement a solution of 
equation (\ref{pde_problem_1}).  As pointed out in
\cite{pooley2002}, many plausible discretization
schemes for HJB equations can generate incorrect
solutions. Ensuring that the discrete
equations are monotone, especially if accurate {\em central
differencing as much as possible} schemes are used, is non-trivial \cite{forsythwang08}.
Policy iteration is known to
converge when the underlying discretization operator for a fixed
control is monotone (i.e., an M-matrix) \cite{bokanowski}.
Seemingly innocent approximations may violate the M-matrix condition,
and cause the policy iteration 
to fail to converge.

A convergent iterative scheme for a finite element approximation with quasi-optimal convergence rate to the solution of a strictly elliptic switching system is proposed and analysed in \cite{boulbrachene2001finite}.
Here, we are concerned with parabolic equations and exploit the fact that approximations of the continuous-time control processes by those piecewise constant in time and attaining only a discrete set of values, lead to accurate approximations of the value function.

A technique which seems to be not commonly used (at least in the
finance community) is based on piecewise constant policy time stepping (PCPT) \cite{krylov00,barlesjakobsen07}.
In this method,  given a 
discrete control set $Q_H=\{q_1,\ldots q_J\}$, $J$ independent
PDEs are solved at each timestep.  Each of the $J$ PDEs has
a constant control $q_j$.  At the end of the timestep,
the maximum value at each computational node is determined,
and this value is the initial value for all $J$ PDEs at
the next step.

Convergence of an approximation in the timestep has been analyzed 
in \cite{krylov99} using purely probabilistic techniques,
which shows that under mild regularity  assumptions a 
convergence order of $1/6$ in the timestep can be proven.
In this and other works \cite{krylov97, krylov00}, applications to 
fully discrete schemes are given and their convergence is deduced.
These estimates seem somewhat pessimistic, in that we typically
observe (experimentally) first order convergence.

Note that this technique has the following advantages:
\begin{itemize}
   \item No policy iteration is required.
    \item Each of the $J$ PDEs has a constant policy, and hence
          it is straightforward to guarantee a monotone,
          unconditionally stable discretization.
             \item Since the PCPT algorithm reduces the solution of a nonlinear HJB
       equation to the solution of a sequence of linear PDEs (at each timestep),
       followed by a simple $\max$ or $\min$ operation, it is straightforward to
       extend existing (linear) PDE software to handle
       the HJB case.

    \item Each of the $J$ PDEs can be advanced in time independently.  Hence
       this algorithm is an ideal candidate for efficient parallel implementation. 
              \item In the case where we seek the solution of a Hamilton-Jacobi-Bellman-Isaacs (HJBI) PDE of the form
              \begin{eqnarray}
                V_{\tau} - \inf_{p \in P} \sup_{q\in Q} L_{q,p} V \!&=\!& 0,
              \end{eqnarray}
          the {\em discretize and optimize} approach may fail due
          to the fact that policy iteration may not converge in this
          case \cite{Wal1978}.  However, the PCPT technique can be
          easily applied to these problems.
   
\end{itemize}

In view of the advantages of piecewise policy time stepping, it
is natural to consider some generalizations of the basic algorithm.
Since  each of the $J$ independent PDE solves has a different
control parameter, it is clearly advantageous to use a different
mesh for each PDE solution.  This may involve an interpolation
operation between the meshes.  If we restrict attention to
purely monotone schemes, then only linear interpolation can
be used.

However, in \cite{barlesjakobsen07}, it is noted that
the solution of the PDE (\ref{pde_problem_1}) can
be approximated by the solution of a switching system
of PDEs with a small switching cost.  There, 
it is shown that the solution of the switching system
converges to the solution of (\ref{pde_problem_1})
as the switching cost tends to zero.  In \cite{barlesjakobsen07},
the switching system was used as a theoretical tool to
obtain error estimates.

Building on the work in \cite{barlesjakobsen07}, the main results of this paper are:

\begin{itemize}
\item We formulate the PCPT algorithm in terms of the equivalent
     switching system, in contrast to \cite{krylov99}.  We then show that a non-monotone
     interpolation operation between the switching system
     meshes is convergent to the viscosity solution of (\ref{pde_problem_1}).
     The only requirement is that the interpolation operator
     be of {\em positive coefficient} type.
     This permits use of limited high order interpolation or
monotonicity preserving (but not monotone) schemes.

  \item We will include two numerical examples.  The first example
is an uncertain volatility problem \cite{lyons95,avellaneda95}  with a bang-bang control, where we demonstrate
the effectiveness of a higher order (not monotone) interpolation
scheme.   The second example is a continuous time mean-variance asset allocation
problem \cite{forsythwang10}.  In this case, it is difficult to determine
the optimal policy at each node using analytic methods, and we follow the
usual program of discretizing the control and determining the optimal
value by exhaustive search.  We compare the numerical results obtained using
PCPT and a standard policy iteration method.  Comparable accuracy is obtained
for both techniques, with the PCPT method having a considerably smaller
computational complexity.
\end{itemize}

\subsection{Outline}
In order to aid the reader, we provide here an overview of 
the steps we will follow to carry out our analysis.
We will write the PCPT algorithm in the  unconventional form
\begin{eqnarray}
\label{firstpw}
\frac{ V_j^{n+1} - \max_{k=1}^J V_k^{n} }{\Delta \tau} - L^h_{q_j} V^{n+1}_j = 0,
\end{eqnarray}
where the optimization step is carried out at the beginning of the new
timestep, as opposed to the conventional form whereby the optimization
is performed at the end of the old timestep.  
Note that the scheme is a time-implicit discretization 
for each fixed control $q_j$, while the optimization is carried out explicitly.
As discussed above, a decided advantage of this approach is that 
this scheme is unconditionally stable and yet no nonlinear iterations are needed in every timestep.

In order to carry out our analysis, we perform the
following string of approximations:
\begin{eqnarray}
\label{eqnhjb}
\hspace{-1.3 cm} \text{\fbox{HJB equation}} &\hspace{0. cm}& V_{\tau} - \sup_{q\in Q} L_q V = 0 \\
\label{eqndiscrcontr}
\hspace{-1.3 cm}\text{\fbox{Control discretization}}  &\hspace{0. cm}& V_{\tau} - \max_{q_j \in Q_H} L_{q_j} V = 0 \\
\label{eqnswitching}
\hspace{-1.3 cm}\text{\fbox{Switching system}} &\hspace{0. cm}& \min(V_{j,\tau} - L_{q_j} V_j, V_j - \max_{k\neq j}(V_k-c) ) = 0 \\
\hspace{-1.3 cm}\fbox{Discretization} &\hspace{0. cm}& \min \left(  
                                     \frac{V_j^{n+1}-V_j^n}{\Delta \tau}
                            - L^h_{q_j} V^{n+1}_j, V_j^{n+1} - \max_{k\neq j}(\widetilde{V}^{n+1}_{k,(j)}-c) 
                            \right) = 0 \nonumber \\
\label{eqndiscr}
\end{eqnarray}
In the HJB equation (\ref{eqnhjb}), the control parameter 
$q$ is assumed to take values in a compact set $Q$, and for fixed $q$,
$L_q$ is a second order elliptic operator as per (\ref{linop}).
The compact set is discretized by a finite set 
$Q_H=\{q_1,\ldots,q_J\}$, where $H$ is the maximum distance 
between any element in $Q$ and $Q_H$.  Of course, in the case of a {\em bang-bang} control,
the admissible set is already discrete.

The resulting equation (\ref{eqndiscrcontr}) can then be approximated by 
the \emph{switching system} (\ref{eqnswitching}) as in \cite{barlesjakobsen07} 
when the cost $c>0$ of switching between controls $j=1,\ldots,J$ goes to zero.
When $c \rightarrow 0$, every $V_j$ converges to the solution of (\ref{eqndiscrcontr}).
The switching cost is included to guarantee that the  system (\ref{eqnswitching})
satisfies the no-loop condition \cite{ishii_1991_a}, and hence
a comparison property holds.
We then freeze the policies over time intervals of length $\Delta \tau$, 
i.e., restrict the allowable policies to those that assume one of the $q_j$ over such time intervals, and discretize the
PDEs in space and time. 
Here, we use the same timestep $\Delta \tau$ for the, 
say, backward Euler time discretization of the PDE, but generalizations are straightforward.
We provide for the possibility that the PDEs for 
different controls are solved on different meshes, and in that 
case interpolation of the discretized value function $V_k$ for control $q_k$ onto the
$j$-th mesh is needed. We denote by $\widetilde{V}_{k,(j)}$ this interpolant of $V_k$ evaluated on the $j$-th mesh.

The remainder of this article is organized as follows.
We conclude this section by giving standard definitions and assumptions on the equation (\ref{hjbintro}).
Section \ref{sec:discr-pol} shows that the control space can be approximated by a finite set, which prepares the formulation as a switching system.
Section \ref{sec:pw-const} introduces a discretization based on piecewise constant policy timestepping, while
Section \ref{sec:conv} contains the main result proving convergence of these approximation schemes satisfying a standard set of conditions, to
the viscosity solution of a switching system.
Section \ref{sec:num-ex-1d} constructs numerical examples for the mean-variance asset allocation problem and the uncertain volatility option pricing model.
Section \ref{sec:discuss} concludes.

\subsection{Preliminaries}

We now give the standard definition of a viscosity solution before making assumptions on $F$.
Given a function $f:\Omega \rightarrow \mathbb{R}$, where $\Omega \subset \R^n$ open, we first define
the upper semi-continuous envelope as
\begin{eqnarray}
   f^*(\vf{x}) = \lim_{r \rightarrow 0^+} \sup\bigl\{ f(\vf{y}) ~\bigm\vert~
                                  y   \in B(\vf{x},r) \cap   \Omega
                               \bigr\},
\end{eqnarray}
where $B(\vf{x},r) = \{ \vf{y} \in \mathbb{R}^n ~\bigm\vert~ |\vf{x}-\vf{y}| < r\}$.
We also have the obvious definition for a lower semi-continuous
envelope $f_*(\vf{x})$.

%
\begin{definition}[Viscosity Solution]\label{VisSol_def}
A locally bounded function $U: \Omega \rightarrow \mathbb{R}$ is a viscosity
subsolution (respectively supersolution) of \eqref{hjbintro}
if and only if for all smooth test functions $\phi \in C^{\infty}$,
and for all maximum (respectively minimum) points $\bf{x}$ of
$U^* - \phi$ (respectively $U_{*} - \phi$), one has
\begin{eqnarray}
& &  F_{*} \left( \mathbf{x}, U^*(\mathbf{x}), \Df\phi(\mathbf{x}),
          \Ds\phi(\mathbf{x}), U^*( {\bf{x} } ) \right)
  \leq 0 \nonumber \\
\biggl( {\mbox{ respectively}} & & F^* \left (  \mathbf{x}, U_{*}(\mathbf{x}),
               \Df\phi(\mathbf{x}), 
          \Ds\phi(\mathbf{x}), U_{*} ({\bf{x}})
         \right)
  \geq 0 \biggr) ~.
\end{eqnarray}
A locally bounded function $U$ is a viscosity solution if it is both
a viscosity subsolution and a viscosity supersolution.
\end{definition}

\begin{remark}[Smoothness of test functions]
Definition \ref{VisSol_def} specifies that $\phi \in C^{\infty}$,
whereas the common definition uses $\phi \in C^2$.  The equivalence
of these two definitions is discussed in \cite{BarlesNotes1997,Seydel2009}.
Letting $\phi \in C^{\infty}$ simplifies the consistency analysis.
\end{remark}

\begin{assumption}[Properties of $F(\cdot)$]
\label{coeff_assump}
\label{comparison_assump}
We assume that $Q$ is a compact set and that
$\sigma_q \sigma_q^T, \mu_q, r_q, f_q$ are bounded on $\mathbb{R}^{d+1}\times Q$, Lipschitz in $\bf{x}$ uniformly in $q$ (i.e., there is a Lipschitz constant which holds for all $q$) and continuous in $q$.
\end{assumption}

\begin{remark}[Comparison principle]
\label{rem_comp}
Assumption \ref{coeff_assump} is the same as the one made in \cite{barlesjakobsen07}.
It guarantees (see, e.g., \cite{barlesjakobsen07, fleming2006controlled}) that
a strong comparison principle holds for $F$, i.e., 
if $V$ and $W$ are viscosity sub- and supersolutions, respectively, of
(\ref{hjbintro}), with $V(\cdot,0)\le W(\cdot,0)$, then $V\le W$ everywhere.
It also ensures the well-posedness of the switching system (\ref{eqnswitching}), see \cite{ishii_1991_a} and \cite{barlesjakobsen07}.
\end{remark}








\section{Approximation by finite control set}
\label{sec:discr-pol}
In this section, we analyze the validity of the first approximation step from (\ref{eqnhjb}) to (\ref{eqndiscrcontr}), i.e.,
that the compact control set may be approximated by a finite set.
More precisely, for compact $Q \subset{\R}^m$ (i.e., $m\in \mathbb{N}$ is the dimension of the parameter space) and $Q_h\subset Q$ such that
\eqn
\max_{q \in Q} \min_{\hat{q} \in Q_h} |q-\hat{q}| \le h,
\label{denseset}
\eqnd
we define a discrete control HJB equation by
\begin{eqnarray}
0 \; = \; F_h({\bf x},V,DV,D^2V) \!\!&=\!\!&
\left\{
\begin{array}{rl}
V_{\tau} - \sup_{q\in Q_h} L_q V, &~~ {\bf x} \in \R^{d} \times 
                                                    (0,T], \\
                             
                              V( {\bf x} ) - \mathcal{G}( {\bf x} ), & ~~
                                       {\bf x} \in \R^{d} \times
                                       \{0 \}.
\end{array}
\right.
\label{hjbintroh}
\end{eqnarray}

The following lemma will be useful.

\begin{lemma}[Properties of $F(\cdot)$]
\label{lipshitz_assump}
Under Assumption \ref{coeff_assump}, for any $\bf{x}$ and any $C^{\infty}$ test function $\phi$
\begin{eqnarray}
   | F({\bf{x}}, \phi( {\bf{x}} ),  D \phi( {\bf{x}} ), D^2 \phi( {\bf{x}} )  ) 
      - F_h( {\bf{x}}, \phi({\bf{x}}) + \xi ,  D \phi( {\bf{x}} ), D^2 \phi( {\bf{x}} )  )   | 
      & \leq&   \omega_h({\bf{x}},h) + \omega_{\xi}(\xi), \nonumber \\
     \text{where} & & \omega_h({\bf{x}},h) \rightarrow 0 \text{ as } h \rightarrow 0, \nonumber \\
     & & \omega_{\xi}(\xi) \rightarrow 0 \text{ as } \xi \rightarrow 0,
   \label{vis_consistent_a}
\end{eqnarray}
where $\omega_h( {\bf{x}}, h)$ is locally
      Lipshitz continuous in ${\bf{x}}$, independent of $h$.
\end{lemma}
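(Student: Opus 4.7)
The plan is to reduce $|F({\bf x},\phi,D\phi,D^2\phi)-F_h({\bf x},\phi+\xi,D\phi,D^2\phi)|$ to two contributions: one from replacing $\sup_Q$ by $\sup_{Q_h}$ (governed by $h$), and one from the perturbation $\xi$ in the zeroth-order slot (governed by $\xi$). Writing
\[
A_q({\bf x}) := \text{tr}(\sigma_q\sigma_q^T\,D^2\phi({\bf x})) + \mu_q^T D\phi({\bf x}) - r_q\phi({\bf x}) + f_q
\]
for the spatial part of $L_q\phi$, the $\phi_\tau$ contribution is identical in $F$ and $F_h$ and cancels, while linearity of $L_q$ in $V$ means that replacing $\phi$ by $\phi+\xi$ merely introduces an additional $-r_q\xi$. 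Hence in the interior
\[
|F-F_h| \;\le\; \bigl|\sup_{Q} A_q({\bf x}) - \sup_{Q_h} A_q({\bf x})\bigr| \;+\; \bigl|\sup_{Q_h} A_q({\bf x}) - \sup_{Q_h}(A_q({\bf x}) - r_q\xi)\bigr|,
\]
and at the initial trace $\tau=0$ both $F$ and $F_h$ reduce to $V-\mathcal{G}$, so the difference is just $|\xi|$.

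I would handle the $\xi$-part immediately via the elementary inequality $|\sup g-\sup h|\le\sup|g-h|$ together with the uniform boundedness of $r_q$ from Assumption~\ref{coeff_assump}. This yields $\omega_\xi(\xi):=(1+\|r\|_\infty)|\xi|$, which absorbs both the interior $\|r\|_\infty|\xi|$ and the boundary $|\xi|$, and clearly tends to zero as $\xi\to 0$.

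For the control-discretisation part I would invoke the density hypothesis (\ref{denseset}). The inclusion $Q_h\subset Q$ gives $\sup_{Q_h}A_q\le\sup_Q A_q$ for free; for the reverse direction, any near-maximiser $q^\star\in Q$ has some $\hat q\in Q_h$ with $|q^\star-\hat q|\le h$, so the gap is controlled by
\[
\omega_h({\bf x},h) := \sup\{\,|A_{q_1}({\bf x})-A_{q_2}({\bf x})| \,:\, q_1,q_2\in Q,\ |q_1-q_2|\le h\,\}.
\]
Continuity of $\sigma_q\sigma_q^T,\mu_q,r_q,f_q$ in $q$ (Assumption~\ref{coeff_assump}), combined with compactness of $Q$, gives uniform continuity of $q\mapsto A_q({\bf x})$ for each fixed ${\bf x}$, whence $\omega_h({\bf x},h)\to 0$ pointwise as $h\to 0$.

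The step I expect to require the most care is establishing local Lipschitz continuity of $\omega_h(\cdot,h)$ in ${\bf x}$ with a constant independent of $h$. Since $\sigma_q\sigma_q^T,\mu_q,r_q,f_q$ are Lipschitz in ${\bf x}$ uniformly in $q$ and the derivatives of $\phi\in C^\infty$ are locally bounded and locally Lipschitz, a standard product-rule split shows that $A_q(\cdot)$ is locally Lipschitz on any compact set with a constant $L_\phi$ independent of $q$. Consequently $|A_{q_1}-A_{q_2}|$ is locally Lipschitz with constant at most $2L_\phi$ uniformly in $(q_1,q_2)$, and taking the supremum over admissible pairs preserves this bound. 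Crucially, $L_\phi$ is extracted before any supremum over $Q$ is taken, which is what rules out any $h$-dependence in the Lipschitz constant.
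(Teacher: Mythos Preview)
Your argument is correct and follows the same overall strategy as the paper's proof: a triangle-inequality split into an $h$-contribution (passing from $Q$ to $Q_h$) and a $\xi$-contribution (the $-r_q\xi$ perturbation), each bounded separately using Assumption~\ref{coeff_assump}. The difference lies in the order of the split. You first remove $\xi$ over $Q_h$ and then pass from $\sup_{Q_h}A_q$ to $\sup_Q A_q$, so your $h$-term is free of $\xi$. The paper instead first passes from $Q_h$ to $Q$ with $\phi+\xi$ still in place, which produces a factor $\max(1,|\xi|)$ in front of the modulus $\bar{\omega}_2({\bf x},h)$; this mixed term is then separated via $\max(1,|\xi|)\,\bar{\omega}_2 \le \bar{\omega}_2 + \bar{\omega}_2^2/2 + \xi^2/2$. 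Your ordering avoids that detour entirely and yields the cleaner $\omega_\xi(\xi)=(1+\|r\|_\infty)|\xi|$. Your explicit treatment of the $\tau=0$ case and your argument for the local Lipschitz property of $\omega_h(\cdot,h)$ via the $q$-uniform local Lipschitz constant of $A_q$ are both sound, and the latter is somewhat more direct than the paper's remark that local Lipschitz continuity follows from the regularity of the coefficients.
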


\begin{proof}
See Appendix \ref{lemma1_proof}.
\end{proof}

\begin{theorem}
\label{theorem:discr}
Given Assumption \ref{comparison_assump},
let
$V$ and $V_h$ be the unique viscosity solutions to
\begin{eqnarray}
   F( {\bf{x}}, V, DV, D^2 V) &=& 0, 
        \label{vis_1} \\
   F_h( {\bf{x}}, V_h, D V_h, D^2V_h) & = & 0. 
      \label{vis_2}
\end{eqnarray}
Then
$V_h \rightarrow V$ uniformly on compact sets as $h\rightarrow 0$.
\end{theorem}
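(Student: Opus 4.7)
The plan is to use the half-relaxed limits technique from viscosity solution theory, with Lemma~\ref{lipshitz_assump} providing the consistency estimate and the comparison principle of Remark~\ref{rem_comp} providing uniqueness in the limit.

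First I would establish uniform stability of $\{V_h\}_{h>0}$. Since $Q_h \subset Q$, we have $\sup_{q\in Q_h} L_q V \le \sup_{q\in Q} L_q V$, so the viscosity solution $V$ of (\ref{vis_1}) is a supersolution of $F_h = 0$; by comparison, $V_h \le V$ uniformly in $h$. For a uniform lower bound I would fix $q^\star \in Q$, choose $\hat q_h \in Q_h$ with $\hat q_h \to q^\star$, and use that the solution of the linear equation $W_\tau = L_{\hat q_h} W$ with $W(\cdot, 0) = \mathcal{G}$ is a subsolution of $F_h = 0$ and is uniformly bounded under Assumption~\ref{coeff_assump}. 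Together these give $\|V_h\|_\infty \le C$ with $C$ independent of $h$.

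Next I would form the upper and lower half-relaxed limits
\begin{equation*}
\overline{V}(\mathbf{x}) \;=\; \limsup_{h\to 0,\, \mathbf{y}\to\mathbf{x}} V_h(\mathbf{y}), \qquad \underline{V}(\mathbf{x}) \;=\; \liminf_{h\to 0,\, \mathbf{y}\to\mathbf{x}} V_h(\mathbf{y}),
\end{equation*}
which are locally bounded with $\underline{V} \le \overline{V}$. The heart of the argument is to show that $\overline{V}$ is a viscosity subsolution of $F = 0$ (the supersolution statement for $\underline{V}$ is symmetric). For a test function $\phi \in C^\infty$ and a strict local maximum point $\mathbf{x}_0$ of $\overline{V} - \phi$, a standard viscosity argument furnishes $\mathbf{x}_h \to \mathbf{x}_0$ with $V_h(\mathbf{x}_h) \to \overline{V}(\mathbf{x}_0)$ at which $V_h - \phi$ attains a local maximum. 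Since $V_h$ is a subsolution of $F_h = 0$, setting $\xi_h = V_h(\mathbf{x}_h) - \phi(\mathbf{x}_h)\to 0$ and applying Lemma~\ref{lipshitz_assump} gives
\begin{equation*}
F_*\bigl(\mathbf{x}_h, \phi(\mathbf{x}_h), D\phi(\mathbf{x}_h), D^2\phi(\mathbf{x}_h)\bigr) \;\le\; F_{h,*}\bigl(\mathbf{x}_h, V_h(\mathbf{x}_h), D\phi(\mathbf{x}_h), D^2\phi(\mathbf{x}_h)\bigr) + \omega_h(\mathbf{x}_h,h) + \omega_\xi(\xi_h) \;\le\; \omega_h + \omega_\xi.
\end{equation*}
Letting $h\to 0$ and using continuity of $F_*$ in its arguments yields $F_*(\mathbf{x}_0, \overline{V}(\mathbf{x}_0), D\phi(\mathbf{x}_0), D^2\phi(\mathbf{x}_0)) \le 0$, the required subsolution inequality.

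Finally, after verifying $\overline{V}(\cdot,0) \le \mathcal{G}(\cdot) \le \underline{V}(\cdot,0)$ via explicit barrier bounds of the form $\mathcal{G}(\mathbf{x}) \pm C\tau$ built from the bounded Lipschitz coefficients of Assumption~\ref{coeff_assump}, the comparison principle of Remark~\ref{rem_comp} gives $\overline{V} \le \underline{V}$. Combined with the trivial reverse inequality this forces $\overline{V} = \underline{V} = V$, and equality of the relaxed limits to a continuous function is equivalent to locally uniform convergence $V_h \to V$. The main technical obstacle I anticipate is the initial-time step: producing an $h$-independent modulus of continuity of $V_h$ up to $\tau = 0$ so that the relaxed limits correctly inherit the initial data. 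Everything else reduces to a routine Barles--Souganidis style argument that is enabled precisely by the consistency estimate of Lemma~\ref{lipshitz_assump}.
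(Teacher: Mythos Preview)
Your proposal is correct and follows essentially the same Barles--Souganidis half-relaxed limits argument as the paper: define $\overline V,\underline V$, use Lemma~\ref{lipshitz_assump} as the consistency estimate to show they are sub/supersolutions of $F=0$, and conclude via comparison. The only noteworthy difference is that the paper builds the initial condition $V-\mathcal G$ into the operator $F$ itself (see (\ref{hjbintro})), so the initial-time barrier step you flag as the main obstacle is absorbed into the sub/supersolution test at $\tau=0$ rather than treated separately; your explicit stability and barrier arguments are therefore unnecessary here, though not incorrect.
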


\begin{proof}
A consequence of Lemma \ref{lipshitz_assump} is
that
\begin{eqnarray}
   F_h^*({\bf{x}}, \phi({\bf{x}}) + \xi, \dots)  
   &  \leq& F^*( {\bf{x}}, \phi({\bf{x}}), \dots)
    + \omega_h( {\bf{x}}, h)  + \omega_{\xi}( \xi).
    \label{vis_consistent}
\end{eqnarray}

Define, for all $\bf{x}$,
\begin{eqnarray}
   \underline{ V }_h( {\bf{x}}) &=&
               \liminf_{ \substack{
                  { \bf{y}} \rightarrow {\bf{x}}
               }}  {V}_h({\bf{y}}), \\
   \underline{ V }(\bf{x})
     &=& \liminf_{ \substack{ h \rightarrow 0 \\
                  { \bf{y}} \rightarrow {\bf{x}}
               }}  \underline{ V}_h({\bf{y}}).
               \label{defsupersol}
\end{eqnarray}

We claim that $\underline{ V }$ is a viscosity
supersolution of equation (\ref{vis_1}).
To show this, fix $\hat{\bf{x}}$  and choose a smooth test function $\phi$ such that
$\hat{\bf{x}}$ is a global minimum of $\underline{V} - \phi$,
and that 
\begin{eqnarray}
   \underline{V}(\hat{\bf{x}}) = \phi(\hat{\bf{x}} ).
         \label{test_min}
\end{eqnarray}

Then, there exists a sequence $\hat{\bf{x}}_h \rightarrow \hat{\bf{x}}$,
$h \rightarrow 0$, $\underline{V}_h(\hat{\bf{x}}_h) \rightarrow 
\underline{V}( \hat{\bf{x}})$,
such that $\hat{\bf{x}}_h$ is  a global minimum of 
$\underline{V}_h(\hat{\bf{x}}_h) - \phi(\hat{\bf{x}}_h)$.  At
each point $\hat{\bf{x}}_h$, since
$V_h$ is a viscosity solution of
(\ref{vis_2}),
\begin{eqnarray}
 0 \leq F_h^*(\hat{\bf{x}}_h, \underline{V}_h ( \hat{\bf{x}}_h ), 
                 D \phi ( \hat{\bf{x}}_h ),
                 D^2 \phi ( \hat{\bf{x}}_h ) ). \label{vis_3}
\end{eqnarray}
Let $\underline{V}_h(\hat{\bf{x}}_h) = \phi(\hat{\bf{x}}_h) + \xi_h$,
$\xi_h \rightarrow 0 , h \rightarrow 0$,
so that equation (\ref{vis_3}) becomes
\begin{eqnarray}
 0 \leq  F_h^*(\hat{\bf{x}}_h, \phi ( \hat{\bf{x}}_h ) + \xi_h, 
                 D \phi ( \hat{\bf{x}}_h ),
                 D^2 \phi ( \hat{\bf{x}}_h ) ) .
     \label{vis_3a}
\end{eqnarray}
From equations (\ref{vis_consistent}) and (\ref{vis_3a})
we obtain
\begin{eqnarray}
0 
      & \leq & F^*( \hat{\bf{x}}_h, \phi ( \hat{\bf{x}}_h ), 
                 D \phi ( \hat{\bf{x}}_h ),
                 D^2 \phi ( \hat{\bf{x}}_h ) ) + \omega_h( {\bf{x}},h) + \omega_\xi( \xi_h),
\end{eqnarray}
which gives us
\begin{eqnarray}
   0  & \leq & \limsup_{ \substack{ h \rightarrow 0 \\
                   \hat{\bf{x}}_h \rightarrow \hat{\bf{x}}
               }}  
               F^*( \hat{\bf{x}}_h, \phi ( \hat{\bf{x}}_h ), 
                 D \phi ( \hat{\bf{x}}_h ),
                 D^2 \phi ( \hat{\bf{x}}_h ) )
         + \limsup_{ \substack{ h \rightarrow 0 ~;~ \xi_h \rightarrow 0\\
                   { \bf{x}} \rightarrow \hat{\bf{x}}
                    }
             }  \bigl( \omega_h( {\bf{x}},h) + \omega_{\xi}( \xi_h) \bigr)  \nonumber \\
     & \leq & F^*( \hat{\bf{x}}, \phi( \hat{\bf{x}} ), 
                 D \phi ( \hat{\bf{x}} ),
                 D^2 \phi ( \hat{\bf{x}} ) )  \nonumber \\
   & = & F^*( \hat{\bf{x}}, \underline{V}( \hat{\bf{x}} ), 
                 D \phi ( \hat{\bf{x}} ),
                 D^2 \phi ( \hat{\bf{x}} ) ),
\end{eqnarray}
where we use equation  (\ref{test_min})
and Lemma \ref{lipshitz_assump}.

Using similar steps, we can show that $\overline{ V }$ defined similar to (\ref{defsupersol})
is a viscosity subsolution of equation (\ref{vis_1}). Invoking the strong comparison principle,
$\overline{ V } = \underline{ V } = V$.  Uniform convergence on compact sets
           follows using the same argument as in
           Remark 6.4, of \cite{usersguide}.
\end{proof}

\section{Piecewise constant policy timestepping}
\label{sec:pw-const}
Here, we use the equivalence of (\ref{eqndiscrcontr}) and (\ref{eqnswitching}) established in \cite{barlesjakobsen07} as $c\rightarrow 0$ to formulate (\ref{eqndiscr}) precisely.
Consider the HJB equation 
\begin{eqnarray}
    V_{\tau} &=& \max_{ q_j \in Q} {L}_{q_j} V, \nonumber \\
     Q &=& \{ q_1, q_2, \ldots, q_J \},
   \label{hjb_1}
\end{eqnarray}
where we assume a discrete set of controls $Q$.
We have shown in Section \ref{sec:discr-pol} that the optimal value under controls chosen from a compact set can be approximated by a control problem with a finite set.

According to \cite{barlesjakobsen07}, we can also approximate
equation (\ref{hjb_1}) by a {\em switching system}.
Let $U_{j}, j=1,\ldots, J$, be the solution of a
system of HJB equations, with
\begin{eqnarray}
   \min \biggl[  U_{j,\tau} - {L}_{q_j} U_j ,
               U_j - \bigl( \max_{k \neq j} \left( U_k - c \right)
                     \bigr)
           \biggr] &=& 0, ~~{\bf x} \in \R^{d} \times 
                                                    (0,T], \nonumber \\
     U_j - \mathcal{G}( {\bf x} ) &=& 0, ~~ 
                                       {\bf x} \in \R^{d} \times
                                       \{0 \}.
         \label{switch_1}
\end{eqnarray}
The constant $c>0$ is required in order to add some small
transaction 
cost to switching from $j \rightarrow k$. 
This cost term also ensures that only
a finite number of switches can occur (otherwise there would
be an infinite transaction cost; see also Remark \ref{rem:swipen}).
It is shown in \cite{barlesjakobsen07} that $U_j\rightarrow V$ as $c\downarrow 0$, for all $j$.

For computational purposes, we define a finite computational domain
$\Omega \subset \R^{d}$.  Let $\partial \hat{\Omega}$ denote
the portions of $\partial \Omega$ where we apply approximate
Dirichlet conditions.
We use the usual notation for representing (\ref{switch_1}).
Define
\begin{eqnarray}
  \mathbf{x} = (S, \tau) ~,~~DU = (U_{\tau} ,  U_{S})~,~~D^2 U = U_{SS}~,
\end{eqnarray}
and let $\mathcal{B}_j ( {\bf x})$ be the approximate Dirichlet
boundary conditions on $\partial \hat{\Omega}$. Then, the {\em localized}
problem is defined as 
\begin{eqnarray}
 \label{switch_2}
 0  & = & F_j \left( \mathbf{x}, U_j, \Df U_j, \Ds U_j, \{ U_k \}_{k \neq j} \right) \\
 && ~~~~~~~~~= 
 \left\{
 \begin{array}{rl}
 \min \biggl[  U_{j,\tau} - {L}_{q_j} U_j ,
               U_j - \bigl( \max_{k \neq j} \left( U_k - c \right)
                     \bigr)
                 \biggr], & ~~~{\bf x} \in ~\Omega \backslash \partial \hat{\Omega} \times (0,T], \\
  U_j( {\bf x}) - \mathcal{G}( {\bf x}), & ~~~{\bf x} \in ~ \Omega \times \{0\}, \nonumber \\
  U_j( {\bf x}) - \mathcal{B}_j ( {\bf x}), & ~~~{\bf x} \in ~ \partial \hat{\Omega} \times (0,T],
\end{array}
\right.
\end{eqnarray}
for $j=1,\ldots, J$.
Letting $p_j = \Df U_j, s_j = \Ds U_j$, we can write equation (\ref{switch_2})
as
\begin{eqnarray}
      F_j \left( \mathbf{x}, U_j, p_j, s_j, \{ U_k \}_{k \neq j} \right) = 0.
      \label{switch_2a}
\end{eqnarray} 
Note that system (\ref{switch_2}) is quasi-monotone (see \cite{ishii_1991}),
since
\begin{eqnarray}
     F_j \left( \mathbf{x}, U_j, p_j, s_j, \{ U_k \}_{k \neq j} \right)
     & \leq &
      F_j \left( \mathbf{x}, U_j, p_j, s_j, \{ W_k \}_{k \neq j} \right)
       \quad {\mbox{ if }} ~ U_k \geq W_k;~k \neq j .
    \label{quasi_monotone}
\end{eqnarray}
We include here the definition of a viscosity solution
for systems of PDEs of the form (\ref{switch_2a})
as used in
\cite{ishii_1991,briani_2011,ishii_1991_a}.

\begin{definition}[Viscosity solution of switching system]\label{Vis_def}
A locally bounded function $U: \Omega \rightarrow \mathbb{R}^J$ is a viscosity
subsolution (respectively supersolution) of \eqref{switch_2a}
if and only if for all smooth test functions $\phi_j \in C^{\infty}$,
and for all maximum (respectively minimum) points $\bf{x}$ of
$U^*_j - \phi_j$ (respectively $U_{j*} - \phi_j$), one has
\begin{eqnarray}
& &  F_{j*} \left( \mathbf{x}, U_j^*(\mathbf{x}), \Df\phi_j(\mathbf{x}),
          \Ds\phi_j(\mathbf{x}), \{ U_k^*( {\bf{x} } ) \}_{k \neq j}
             \right)
  \leq 0 \nonumber \\
\biggl( {\mbox{ respectively}} & & F_j^* \left (  \mathbf{x}, U_{j*}(\mathbf{x}),
               \Df\phi_j(\mathbf{x}), 
          \Ds\phi_j(\mathbf{x}), \{ U_{k*} ({\bf{x}}) \}_{k \neq j}
         \right)
  \geq 0 \biggr) ~.
\end{eqnarray}
A locally bounded function $U$ is a viscosity solution if it is both
a viscosity subsolution and a viscosity supersolution.
\end{definition}

\begin{remark}
Note that the $j$-th test function only replaces the derivatives
operating on $U_j$. The terms which are a function of $U_k, k\neq j$, are not
affected.
\end{remark}
We discretize (\ref{switch_2})  using the idea of piecewise constant policy
timestepping.  Define a set of nodes $S_{j,i}$ and timesteps $\tau^n$,
with discretization parameters $h$ and $\Delta \tau$, i.e.,
\begin{eqnarray}
\max_{\stackrel{1\le j\le J}{S\in \Omega}} \min_i  |S - S_{j,i}| & = h, \\
   \max_n (\tau^{n+1} - \tau^n) & = \Delta \tau. \nonumber
\end{eqnarray}
The distinction between $\Delta \tau$ and $h$ is useful for the formulation of the algorithm, but somewhat arbitrary for the analysis. We will therefore label meshes and approximations by $h$ and assume that
\[
\Delta \tau\rightarrow 0 \quad \text{as} \quad h\rightarrow 0.
\]
Define
\begin{eqnarray}
   \mathbf{x}_{j,i}^n(h)  = ( S_{j,i}, \tau^n; h) \in \Omega_{j,h},
\end{eqnarray}
where $(S_{j,i}, \tau^n)$ refer to points on a specific grid $j$,
and the set of grid points on the grid parameterized by
$h$ is $\Omega_{j,h}$.

Then we denote the discrete approximation to $U_j(\mathbf{x}_{j,i}^n)$
on a grid parameterized by $h$  by
$u_j(h, \mathbf{x}_{j,i}^n)$, which is extended to a function $u_j(h,\cdot)$ on $\Omega \times \{\tau_n\}$ by interpolation.
We will sometimes use the shorthand
notation
\begin{eqnarray}
   u_{j,i}^n = u_j(h, \mathbf{x}_{j,i}^n), \quad \mathbf{x}_{j,i}^n =  (S_{j,i} , \tau^n),
\end{eqnarray}
where the dependence on $h$ is understood implicitly.
Note that by parameterizing $ \mathbf{x}_{j,i}^n$ by the control index $j$,
we are allowing for different discretization grids for different controls.

Let $\mathcal{L}_{q_j}^h$ be the discrete
form of the operator $\mathcal{L}_{q_j}$.  We discretize
equation (\ref{switch_2}) for ${\bf x} \in \Omega \backslash \partial \hat{\Omega} \times 
(0,T]$, using $\Delta \tau=\tau^{n+1}-\tau^n$ constant for simplicity, by \emph{piecewise constant policy timestepping}
\begin{eqnarray}
\nonumber
  u_{j,i}^{n+\frac{1}{2}} &=& \max \Bigl[ u_{j,i}^n, \max_{k \neq j} \left( \tilde{u}_{k,i(j)}^n - c \right) 
               \Bigr], \\
  u_{j,i}^{n+1} - \Delta \tau {L}_{q_j}^h u_{j,i}^{n+1}
      & = & u_{j,i}^{n+\frac{1}{2}}, \; \qquad\qquad j=1, \ldots, J, 
    \label{dis_1}
\end{eqnarray}
where
\begin{eqnarray}
       \tilde{u}_{k,i(j)}^n \equiv u_k(h, \mathbf{x}_{j,i}^n)
\end{eqnarray}
is
the value of this interpolant $u_k(h,\cdot)$ of $u_k(h, \mathbf{x}_{j,i}^n)$ at the $i$-th
point of grid $\Omega_{j,h}$.


Discretization (\ref{dis_1}) applies the ``$\max$'' constraint at the
beginning of a new timestep.  Conventionally, one thinks of piecewise constant
policy timestepping as applying the constraint at the end of a timestep.

Clearly, these would be algebraically the same thing if $u_{j,i}^{n+\frac{1}{2}}$ instead of $u_{j,i}^{n}$ was
considered as approximation to $U_j(\mathbf{x}_{j,i}^n)$.
So at the final timestep, these two possible approximations only differ by a final max operation.
However, it will be convenient to apply the constraint as in equation (\ref{dis_1}).
We can then rearrange equation (\ref{dis_1}) to obtain an equation in the form
\begin{eqnarray}
   & &  \hspace{-2.5 cm} G_j\biggl( \mathbf{x}_i^{n+1}, h, u_{j,i}^{n+1}, \{ u_{j,a}^{b+1} \}_{
                                              \substack{a\neq i \\
                                               \text{or }  b\neq n}
                                              },
                       \{ \tilde{u}_{k,i}^n \}_{k \neq j}
              \biggr) \nonumber \\
    &=& 
    \min \biggl[
       \frac{u_{j,i}^{n+1} - u_{j,i}^n }
            {\Delta \tau}
         - {L}_{q_j}^h u_{j,i}^{n+1} ,
        u_{j,i}^{n+1} - \max_{k \neq j} \left( \tilde{u}_{k,i(j)}^n - c \right)
        - \Delta \tau {L}_{q_j}^h u_{j,i}^{n+1}
    \biggr]  \nonumber \\
   &=& 0, ~~ {\bf x} \in \Omega \backslash \partial \hat{\Omega} \times (0,T].
   \label{dis_gen}
\end{eqnarray}
In the event that $L_q$ is strictly elliptic, then we can interpret
the effect of switching at the beginning of the timestep
as adding a vanishing viscosity term 
to the switching part of the equation.  However, note that we do not
in general
require that $L_q$ be strictly elliptic.

We omit the trivial discretizations of $F_j(\cdot)$ in the remaining (boundary) portions
of the computational domain.
Note that the notation
\begin{eqnarray}
    \{ u_{j,a}^{b+1} \}_{
                                              \substack{a\neq i \\
                                               \text{or }  b\neq n}
                                              }
\end{eqnarray}
refers to the set of discrete solution values at nodes neighbouring (in time and space) node $(i,n+1)$.

\section{Convergence of approximations to the switching system}
\label{sec:conv}
Here, we prove convergence of the piecewise constant policy 
approximation (\ref{eqndiscr}) to the solution of the 
switching system (\ref{eqnswitching}). We start by summarizing the main conditions
we need.  We will verify that these conditions are satisfied for all the
schemes we use in our numerical examples.

\begin{condition}[Positive interpolation]
\label{assumption:interpolation}
We require that the interpolant $\tilde{u}_{k,i(j)}^n$ of the $k$-th grid onto the $i$-th point of the $j$-th grid can be written as
\begin{eqnarray}
\label{interp}
    \tilde{u}_{k,i(j)}^n &=& \sum_{ \alpha \in N^k(j,i,n) } \omega_{k,i(j),\alpha}^n u_{k,\alpha}^n 
      =  \sum_{\alpha \in N^k(j,i,n)  } \omega_{k,i(j),\alpha}^n u_k( h, \mathbf{x}_{k,\alpha}^n ), 
\end{eqnarray}      
where
\begin{eqnarray}
\sum_{\alpha \in N^k(j,i,n) } \omega_{k,i(j),\alpha}^n = 1, \qquad \omega_{k,i(j),\alpha}^n \geq 0,
            \label{interp_wts}
\end{eqnarray}
and $N^k(j,i,n) $ are the neighbours to the point $\mathbf{x}_{j,i}^{n}$ on 
grid $\Omega_{k,h}$.  
\end{condition}

\begin{remark}[Monotone versus limited interpolation]
Condition (\ref{interp}), (\ref{interp_wts}) is clearly satisfied by linear interpolation on simplices and multi-linear interpolation on hyper-rectangles,
in which case the weights $\omega_{k,i(j),\alpha}^n$ are only functions of the coordinates $x^n_{j,i}$ and $x^n_{k,\alpha}$. This interpolation is then also a monotone operation and results in an overall monotone scheme, as defined below in Condition \ref{assumption:monotonicity}.

We can, however, relax the requirement of monotonicity of the interpolation step by allowing weights
$\omega_{k,i(j),\alpha}^n = \omega_{k,i(j),\alpha}^n(u_k^n)$, i.e., possibly nonlinear functions of the interpolated nodal values.
It is easy to see that as long as
\begin{eqnarray}
\label{limited_interp}
\min_{\alpha \in N^k(j,i,n)} u_{k,\alpha}^n  \;\; \le  \;\;  \tilde{u}_{k,i(j)}^n \;\; \le \;\; \max_{\alpha \in N^k(j,i,n)} u_{k,\alpha}^n,
\end{eqnarray}
one can then always find weights such that (\ref{interp}), (\ref{interp_wts}) hold.
One can enforce (\ref{limited_interp}) by a simple limiting step applied to any, potentially higher order, interpolant.

Generally, the decomposition (\ref{interp}) will not be the way by which a particular interpolation is originally defined or constructed in practice. The point is that whenever (\ref{limited_interp}) holds, weights of this form exist and this is all that is needed for the theoretical analysis.
An example are one-dimensional monotonicity preserving interpolants such as those in \cite{fritsch}.  As discussed in
\cite{debrabantjakobsen12}, the interpolation in \cite{fritsch} 
is constructed to be monotonicity preserving (i.e., the interpolant is increasing over intervals where the interpolated values are increasing), and therefore satisfies condition (\ref{interp_wts}). 
\end{remark}

\begin{condition}[Weak Monotonicity]
\label{assumption:monotonicity}
We require that discretization (\ref{dis_gen}) be monotone with respect to
$u_{j,a}^{b+1}, \tilde{u}_{k,i}^n$, i.e., if
\begin{eqnarray}
   w_{j,i}^{n} &\geq& u_{j,i}^{n} \qquad \forall (i,j,n),
           \nonumber \\
    \tilde{w}_{k,i(j)}^{n} &\geq& \tilde{u}_{k,i(j)}^{n} \qquad \forall (i,k,n),
    \label{monotone_1}
\end{eqnarray}
then
\begin{eqnarray}
 G_j\biggl( \mathbf{x}_i^{n+1}, h, u_{j,i}^{n+1}, \{ w_{j,a}^{b+1} \}_{
                                              \substack{a\neq i \\
                                               \text{or }  b\neq n}
                                              },
                       \{ \tilde{w}_{k,i(j)}^n \}_{k \neq j}
              \biggr)  
  \leq G_j\biggl( \mathbf{x}_i^{n+1}, h, u_{j,i}^{n+1}, \{ u_{j,a}^{b+1} \}_{
                                              \substack{a\neq i \\
                                               \text{or }  b\neq n}
                                              },
                       \{ \tilde{u}_{k,i(j)}^n \}_{k \neq j}
              \biggr)~.
   \label{mono_1}
\end{eqnarray}
\end{condition}

\begin{remark}
Note that the requirement that the scheme be monotone in $\tilde{u}_{k,i}^n$ (the interpolated solution) is a weaker
condition than requiring monotonicity in $u_{k,\alpha}^n$. In particular, let us re-iterate that we are not requiring interpolation to be
a monotone operation, as long as Condition \ref{assumption:interpolation} is satisfied.
\end{remark}

\begin{condition}[$l_\infty$ stability]
\label{assumption:stability}
We require that the solution of 
equation (\ref{dis_gen}), $u_j(h, \mathbf{x}_{j,i}^{n+1})$,  exists and
is bounded independent of $h$.
\end{condition}

\begin{remark}
The bounds (\ref{limited_interp}) implied by Condition \ref{assumption:interpolation} ensure that the interpolation step does not increase
the $l_\infty$ norm of the solution.
\end{remark}

\begin{condition}[Consistency]
\label{assumption:consistency}
We require that we have  local consistency, in the sense that,
for any smooth function $\phi_j$, and any functions 
$\rho_k$ (not necessarily smooth)
\begin{eqnarray}
      & & \hspace{-1.5 cm} \biggl|  G_j\biggl( \mathbf{x}_{j,i}^{n+1}, h, 
                \phi_{j,i}^{n+1} + \xi, 
                  \{   \phi_{j,a}^{b +1} \}_{
                                              \substack{a\neq i \\
                                               \text{or }  b\neq n}
                                              } + \xi,
                       \{ \tilde{\rho}_k(\mathbf{x}_{j,i}^{n_\ell}  ) \}_{k \neq j}
              \biggr) \biggr. \nonumber \\
    & & \biggl.  - F_j \biggl( \mathbf{x}_{j,i}^{n+1}, \phi_j( \mathbf{x}_{j,i}^{n+1} ),
             \Df\phi_j( \mathbf{x}_{j,i}^{n+1} ),
             \Ds\phi_j( \mathbf{x}_{j,i}^{n+1} ),
             \{ \tilde{\rho}_{k}( \mathbf{x}_{j,i}^{n_\ell}) \}_{k \neq j} \biggr) \biggr|
         \leq  \omega_1(h) +  \omega_2 (\xi),  \nonumber \\
       & & \qquad \qquad \qquad ~~~~~~~~~~~~~~~~~~~~~~~~~~~~ \omega_1(h) \rightarrow 0 \text{ as } h \rightarrow 0, \quad \omega_2(\xi) \rightarrow 0 \text{ as } \xi \rightarrow 0.
              \label{sub_sol_2}
\end{eqnarray} 
\end{condition}

\begin{theorem}
\label{theorem:gridconv}
Under Assumption \ref{coeff_assump}, 
the solution of any scheme of the form (\ref{dis_gen})
satisfying Conditions \ref{coeff_assump}--\ref{assumption:consistency}
converges to the viscosity solution of (\ref{switch_2}), uniformly on bounded domains.
\end{theorem}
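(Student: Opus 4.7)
The plan is to adapt the Barles--Souganidis framework to the switching-system setting of Definition \ref{Vis_def}, with careful handling of the non-monotone mesh-transfer step. Define the relaxed semi-continuous limits
\[
\overline{U}_j(\mathbf{x}) \;=\; \limsup_{\substack{h\to 0 \\ \mathbf{y}\to \mathbf{x}}} u_j(h,\mathbf{y}), \qquad \underline{U}_j(\mathbf{x}) \;=\; \liminf_{\substack{h\to 0 \\ \mathbf{y}\to \mathbf{x}}} u_j(h,\mathbf{y}),
\]
which are finite by Condition \ref{assumption:stability}. The target is to show that $(\overline{U}_j)_{j=1}^{J}$ is a viscosity subsolution and $(\underline{U}_j)_{j=1}^{J}$ a viscosity supersolution of the switching system (\ref{switch_2}). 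Since Assumption \ref{coeff_assump} yields a strong comparison principle for (\ref{switch_2}) (Remark \ref{rem_comp}), combined with matching initial data this forces $\overline{U}_j=\underline{U}_j = U_j$ for each $j$, where $U=(U_1,\ldots,U_J)$ is the viscosity solution of (\ref{switch_2}); uniform convergence on bounded subsets then follows by the standard argument of Remark~6.4 of \cite{usersguide}.

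For the subsolution property, fix $j$, a smooth $\phi_j$, and a strict global maximum $\hat{\mathbf{x}}$ of $\overline{U}_j - \phi_j$ with $\overline{U}_j(\hat{\mathbf{x}}) = \phi_j(\hat{\mathbf{x}})$. A standard compactness argument yields a sequence of grid points $\hat{\mathbf{x}}_h\to \hat{\mathbf{x}}$ at which $u_j(h,\cdot)-\phi_j$ attains its global maximum, with $\xi_h := u_j(h,\hat{\mathbf{x}}_h)-\phi_j(\hat{\mathbf{x}}_h)\to 0$. At every other space-time node, $u_{j,a}^{b+1} \le \phi_{j,a}^{b+1} + \xi_h$, so by Condition \ref{assumption:monotonicity} applied to the exact equation $G_j(\cdot)=0$,
\[
G_j\Bigl(\hat{\mathbf{x}}_h,\, h,\, \phi_j(\hat{\mathbf{x}}_h)+\xi_h,\, \{\phi_{j,a}^{b+1}+\xi_h\},\, \{\tilde{u}_{k,i(j)}^n\}_{k\ne j}\Bigr) \;\le\; 0.
\]
The key step then exploits Condition \ref{assumption:interpolation}: since $\tilde{u}_{k,i(j)}^n$ is a convex combination of nodal values in a stencil that shrinks to $\hat{\mathbf{x}}$,
\[
\tilde{u}_{k,i(j)}^n \;\le\; \max_{\alpha \in N^k(j,i,n)} u_{k,\alpha}^n,
\]
whose limsup as $h\to 0$ is bounded by $\overline{U}_k(\hat{\mathbf{x}})$. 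Replacing $\tilde{u}_{k,i(j)}^n$ by $\overline{U}_k(\hat{\mathbf{x}})+o(1)$ decreases $G_j$ further by Condition \ref{assumption:monotonicity}, and applying Condition \ref{assumption:consistency} with $\rho_k\equiv \overline{U}_k(\hat{\mathbf{x}})$ and passing to the limit $h\to 0$ yields
\[
F_{j*}\Bigl(\hat{\mathbf{x}},\phi_j(\hat{\mathbf{x}}),\Df\phi_j(\hat{\mathbf{x}}),\Ds\phi_j(\hat{\mathbf{x}}),\{\overline{U}_k(\hat{\mathbf{x}})\}_{k\ne j}\Bigr) \;\le\; 0,
\]
which is precisely the subsolution inequality of Definition \ref{Vis_def}. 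The supersolution argument for $\underline{U}$ is entirely symmetric: pick a strict minimum of $\underline{U}_j - \phi_j$, use the lower bound in (\ref{limited_interp}), and invoke $F_j^*$ in place of $F_{j*}$.

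The main obstacle is the interpolation step, since a naive Barles--Souganidis argument would demand monotonicity of $G_j$ in every nodal value $u_{k,\alpha}^n$, a property that genuinely fails for the limited higher-order interpolants motivating this paper. The decomposition (\ref{interp})--(\ref{interp_wts}) guaranteed by Condition \ref{assumption:interpolation} is tailored to circumvent this: because $\tilde{u}_{k,i(j)}^n$ is sandwiched between the min and max of its stencil values irrespective of how the (possibly solution-dependent) limiter weights are chosen, one can control it by the relaxed semi-continuous envelopes of $u_k$ without ever needing monotonicity of the interpolant viewed as an operator on nodal data. This is also why the scheme-level monotonicity in Condition \ref{assumption:monotonicity} is only required in the weak form involving the scalar $\tilde{u}_{k,i(j)}^n$, and it is precisely this weakened requirement that makes the whole strategy of using non-monotone interpolation for mesh transfer viable.
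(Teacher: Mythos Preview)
Your proposal is correct and follows essentially the same route as the paper: the Barles--Souganidis half-relaxed limits, weak monotonicity to substitute $\phi_j+\xi_h$ for $u_j$, the positive-coefficient bound (\ref{limited_interp}) to control $\limsup \tilde{u}_{k,i(j)}^n \le \overline{U}_k(\hat{\mathbf{x}})$, a second use of weak monotonicity in the interpolated value, then consistency and comparison. The only cosmetic difference is that where you write ``replace $\tilde{u}_{k,i(j)}^n$ by $\overline{U}_k(\hat{\mathbf{x}})+o(1)$'', the paper makes this precise by setting $\tilde{\rho}_{k} := \max\bigl(\tilde{u}_{k},\,\overline{U}_k(\hat{\mathbf{x}})\bigr)$, which is manifestly $\ge \tilde{u}_k$ (so monotonicity applies) and converges to $\overline{U}_k(\hat{\mathbf{x}})$ by (\ref{interp_wts_2}); this avoids any ambiguity about what the $o(1)$ term is and lets the consistency condition be invoked with a single well-defined $\rho_k$ at each level.
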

\begin{proof}
We basically follow along the lines in \cite{barlessouganidis}, with the 
generalizations in \cite{briani_2011} for weakly coupled systems.  We include
the details here in order to show that we only require
Condition \ref{assumption:interpolation} for the interpolation operator,
and this permits use of certain classes of high order interpolation.

Define the upper semi-continuous function $\overline{u}$ by
\begin{eqnarray}
  \overline{u}_j( \hat{\mathbf{x}} )
   = \limsup_{ \substack{ h \rightarrow 0 \\
                          \mathbf{x}_{j,i}^{n+1} \rightarrow \hat{\mathbf{x}}
                         }
             }  ~ u_{j}(h, \mathbf{x}_{j,i}^{n+1} )~,
\end{eqnarray}
where $\mathbf{x}_{j,i}^{n+1} \in \Omega_{j,h}.$
Similarly, we define the lower semi-continuous function $\underline{u}$ by
\begin{eqnarray}
  \underline{u}_j( \hat{\mathbf{x}} )
   = \liminf_{ \substack{ h \rightarrow 0 \\
                          \mathbf{x}_{j,i}^{n+1} \rightarrow \hat{\mathbf{x}}}
             }  ~ u_{j}(h, \mathbf{x}_i^{n+1} )~.
\end{eqnarray}
Note that the above definitions imply that $\overline{u}_j^* = \overline{u}_j$
and $\underline{u}_{j*} = \underline{u}_j$.

Let $\hat{\mathbf{x}}$ be fixed and $\phi_j$ be a smooth test function such that
\begin{eqnarray}   
   \phi_j( \hat{\mathbf{x}}) &= &\overline{u}_j(  \hat{\mathbf{x}} ), \nonumber \\
   \phi_j( \mathbf{x}) & > &\overline{u}_j(  \mathbf{x} ), \quad \mathbf{x} \neq \hat{\mathbf{x}} .
      \label{phi_prop}
\end{eqnarray}
This of course means that $ (\overline{u}_j - \phi_j)$ has a global maximum at 
$\mathbf{x} = \hat{\mathbf{x}}$.
Consider a sequence of
grids with discretization parameter $h_\ell$, such that
$h_\ell \rightarrow 0$ for $l \rightarrow \infty$.
We use the notation
\begin{eqnarray}
   \mathbf{x}_{j,i_l}^{n_\ell+1} = (S_{j,i_\ell}, \tau^{n_\ell+1}; h_\ell)
          \in \Omega_{j,h_\ell}
\end{eqnarray}
to refer to the grid point $(i_\ell, n_\ell+1)$ on the grid parameterized
by $h_\ell$, associated with discrete control $q_j$.  
Let $\mathbf{x}_{j,i_\ell}^{n_\ell+1}$ be the point on grid $\Omega_{j,h_\ell}$
such that
\begin{eqnarray}
   \biggl( u_j(h_\ell, \mathbf{x}_{j,i_\ell}^{n_\ell+1}) - \phi_j(\mathbf{x}_{j,i_\ell}^{n_\ell+1})
   \biggr)
     \label{glob_max}
\end{eqnarray}
has a global maximum, where $\phi_j$ is a test function
satisfying equation (\ref{phi_prop}).   Note that in general,
for any finite $h_{\ell}$, $\mathbf{x}_{j,i_\ell}^{n_\ell+1} \neq \hat{\mathbf{x}}$.

Following the usual arguments from \cite{barlessouganidis},
and more particularly in \cite{briani_2011}, for
a sequence of grids $\Omega_{j,h_\ell}$ parameterized by $h_l$, there exists
a set of grid nodes $(i_\ell, n_\ell+1)$, such that
(\ref{glob_max}) is a global maximum and
\begin{eqnarray}
   & \mathbf{x}_{j,i_\ell}^{n_\ell+1} \rightarrow \hat{\mathbf{x}},~~
   \mathbf{x}_{j,i_\ell}^{n_\ell} \rightarrow \hat{\mathbf{x}} ,~~
   u_j(h_\ell, \mathbf{x}_{j,i_\ell}^{n_\ell+1}) \rightarrow \overline{u}_j( \hat{\mathbf{x}}), \quad
   \text{for }
   \ell \rightarrow \infty, &  \nonumber
\end{eqnarray}
where $\mathbf{x}_{j,i_\ell}^{n_\ell+1}, \mathbf{x}_{j,i_\ell}^{n_\ell} \in \Omega_{j,h_\ell}$
and, for $k \neq j$, noting equation (\ref{interp_wts}) 
for the interpolant $\tilde{u}_{k}(\mathbf{x}_{k,i_{\ell}(j)}^{n_\ell})$,
we have
\begin{eqnarray}
 \tilde{u}_{k,i_{\ell}(j)}^{n_\ell}  \equiv \tilde{u}_{k}( h_\ell, \mathbf{x}_{j,i_\ell}^{n_\ell})
         = \sum_{\alpha_\ell \in N^k(j,i_\ell,n_\ell) } 
                   \omega_{k,i_{\ell}(j), \alpha_\ell}^{n_\ell} u_{k, \alpha_\ell}^{n_\ell}
    = \sum_{\alpha_\ell \in N^k(\cdot)} \omega_{k,i_\ell, \alpha_\ell}^{n_\ell} u_{k}( \mathbf{x}_{k, \alpha_\ell}^{n_\ell} ),
      && k \neq j~, \nonumber \\
   \sum_{\alpha_\ell \in N^k(\cdot)} \omega_{k,i_\ell, \alpha_\ell}^{n_\ell}
      \mathbf{x}_{k,\alpha_\ell}^{n_\ell} \rightarrow \hat{\mathbf{x}}, &&
    \ell \rightarrow \infty~ ,   \nonumber \\
     \displaystyle \limsup_{ \ell  \rightarrow \infty} 
       \tilde{u}_{k, i_{\ell}(j)} \leq \overline{u}_k( \hat{\mathbf{x}})~, &&
    \label{interp_wts_2}
\end{eqnarray}
where $ \mathbf{x}_{k,\alpha_\ell}^{n_\ell} \in \Omega_{k,h_\ell}$ and $\hat{\mathbf{x}} = (\hat{S}, \hat{\tau})$.
Let 
\begin{eqnarray}
    u_{j,i_\ell}^{n_\ell+1} &\equiv& u_j(h_\ell, \mathbf{x}_{j,i_\ell}^{n_\ell+1})~, 
                \nonumber \\
     \phi_{j,i_{\ell}}^{n_\ell+1} & \equiv & \phi( \mathbf{x}_{j,i_\ell}^{n_\ell+1}) 
  ~.
\end{eqnarray}
Note that for any finite mesh size $h_\ell$ the global maximum in equation (\ref{glob_max})
is not necessarily zero, hence we define $\xi_l$ by
\begin{eqnarray}
   u_{j,i_\ell}^{n_\ell+1} &=& \phi_{j,i_{\ell}}^{n_\ell+1} + \xi_l~, \nonumber
\end{eqnarray}
such that
\begin{eqnarray}
  \xi_{\ell} \rightarrow 0 \quad \text{for } \ell \rightarrow \infty~.
     \label{xi_1}
\end{eqnarray}
Since (\ref{glob_max}) is a global maximum at $\mathbf{x}_{i_\ell}^{n_\ell+1}$, then
\begin{eqnarray}
  \{ u_{j,a_\ell}^{b_\ell+1} \}_{
                          \substack{a_\ell\neq i_\ell \\
                          \text{or }  b_\ell\neq n_\ell}
                       } 
   &\leq&  \{ \phi_{j,a_\ell}^{b_\ell +1} \}_{
                     \substack{a_\ell\neq i_\ell \\
                          \text{or }  b_\ell\neq n_\ell}
                       } 
              + \xi_{\ell}~.
             \label{xi_2}
\end{eqnarray}

Substituting equations (\ref{xi_1}) and (\ref{xi_2}) into 
equation (\ref{dis_gen}), and using the monotonicity
property of the discretization (\ref{monotone_1})  gives
\begin{eqnarray}
    0 \geq 
         G_j\biggl( \mathbf{x}_{j, i_\ell}^{n_\ell+1}, h_\ell, 
                \phi_{j,i_{\ell}}^{n_\ell+1} + \xi_\ell, 
                  \{   \phi_{j,a_\ell}^{b_\ell +1} \}_{
                                              \substack{a\neq i \\
                                               \text{or }  b\neq n}
                                              } + \xi_\ell,
                       \{ \tilde{u}^{n_\ell}_{k,i_\ell} \}_{k \neq j}
              \biggr) ~.
   \label{sub_sol_1a}
\end{eqnarray}
Note that we do not replace $\{ \tilde{u}_{k,i_\ell}^{n_\ell} \}_{k \neq j}$ by the test function
in  equation (\ref{sub_sol_1a}). This is because the test function is only defined
such that $ \phi_j \geq \overline{u}_j$,  and there is no such relationship with
$u_k, k \neq j$.
Let
\begin{eqnarray}
   \tilde{\rho}_{k, i_\ell}^{n_\ell} = \tilde{\rho}_{k}( \mathbf{x}_{j,i_\ell}^{n_\ell} )  
        = \max( \tilde{u}_k( \mathbf{x}_{j,i_\ell}^{n_\ell}), 
                        \overline{u}_k( \hat{  \mathbf{x} } ))~.
              \label{upper_rho}
\end{eqnarray}
From equation (\ref{interp_wts_2}), we have that
\begin{eqnarray}
   \lim_{\ell \rightarrow \infty}  \{ \tilde{\rho}_{k}( \mathbf{x}_{j,i_\ell}^{n_\ell}) \}_{k \neq j}
    = \overline{u}_k( \hat{\mathbf{x}} )~,
    \label{limit_uk_a}
\end{eqnarray}
and that
\begin{eqnarray}
\tilde{\rho}_{k, i_\ell}^{n_\ell} \geq \tilde{u}^{n_\ell}_{k,i_\ell}~.\label{limit_uk_aa}
\end{eqnarray}
Substituting equation (\ref{upper_rho})  into
equation (\ref{sub_sol_1a}), and using the monotonicity
property of the discretization (\ref{monotone_1})  gives
\begin{eqnarray}
    0 \geq 
         G_j\biggl( \mathbf{x}_{j, i_\ell}^{n_\ell+1}, h_\ell, 
                \phi_{j,i_{\ell}}^{n_\ell+1} + \xi_\ell, 
                  \{   \phi_{j,a_\ell}^{b_\ell +1} \}_{
                                              \substack{a\neq i \\
                                               \text{or }  b\neq n}
                                              } + \xi_\ell,
                       \{ \tilde{\rho}^{n_\ell}_{k,i_\ell} \}_{k \neq j}
              \biggr) ~.
   \label{sub_sol_1}
\end{eqnarray}

Equations (\ref{sub_sol_1}) and (\ref{sub_sol_2}) then imply that 
\begin{eqnarray*}
   0 & \geq & F_j \biggl( \mathbf{x}_{j,i_\ell}^{n_\ell+1}, \phi_j( \mathbf{x}_{j,i_\ell}^{n_\ell+1} ),
             \Df\phi_j( \mathbf{x}_{j,i_\ell}^{n_\ell+1} ),
             \Ds\phi_j( \mathbf{x}_{j,i_\ell}^{n_\ell+1} ),
             \{ \tilde{\rho}_{k}( \mathbf{x}_{j,i}^{n_\ell}) \}_{k \neq j} \biggr) 
      - \omega_1( h_\ell) - \omega_2( \xi_\ell )~.
\end{eqnarray*}
Recalling equation (\ref{limit_uk_a}), we have that
\begin{eqnarray}
   0  & \geq & \liminf_{\ell \rightarrow \infty}  F_j
           \biggl( \mathbf{x}_{i_\ell}^{n_\ell+1}, \phi_j( \mathbf{x}_{i_\ell}^{n_\ell+1} ),
             \Df\phi_j( \mathbf{x}_{i_\ell}^{n_\ell+1} ),
             \Ds\phi_j( \mathbf{x}_{i_\ell}^{n_\ell+1} ),
          \{ \tilde{\rho}_{k}( \mathbf{x}_{j,i_\ell}^{n_\ell}) \}_{k \neq j}
         \biggr)
                 \nonumber \\
      & & -  \liminf_{\ell \rightarrow\infty}  \omega_1(h_\ell) 
           - \liminf_{\ell \rightarrow \infty} \omega_2( \xi_\ell) \nonumber \\
    & \geq & 
       F_{j*} \biggl( \hat{\mathbf{x}}, \phi_j( \hat{\mathbf{x}} ),
                  \Df\phi_j(  \hat{\mathbf{x}} ), \Ds\phi_j(  \hat{\mathbf{x}} ),
                            \{ \overline{u}_k(  \hat{\mathbf{x}} ) \}_{k \neq j} \biggr) \nonumber \\
    & = & F_{j*} \biggl( \hat{\mathbf{x}}, \overline{u}_j(\hat{\mathbf{x}} ),
                     \Df\phi_j(  \hat{\mathbf{x}} ), \Ds\phi_j(  \hat{\mathbf{x}} ),
                           \{\overline{u}_k(  \hat{\mathbf{x}} ) \}_{k \neq j} \biggr)~.
\end{eqnarray}
Hence $\overline{u}_j$ is a subsolution of equation (\ref{switch_2}).
A similar argument shows that $\underline{u}_j$ is a supersolution of
equation (\ref{switch_2}).  Since a strong comparison principle
holds for the switching system under Assumption \ref{coeff_assump} (see Proposition 2.1 in \cite{barlesjakobsen07} and Remark \ref{rem_comp}), we then have $\underline{u}_j = \overline{u}_j$ is the unique continuous viscosity solution of equation (\ref{switch_2}).

We have thus shown the result.
\end{proof}

\begin{remark}
Note that we have $\{\tilde{u}_{k,i(j)}^n\}$ appearing in equation (\ref{dis_gen}),
which would appear to cause a problem in terms of consistency,
since we cannot assume that $\tilde{u}_{k,i(j)}^n = \tilde{u}_{k,i(j)}^{n+1} + O(h)$ since
$u_{k,\alpha}^n$ are not necessarily smooth.  The key fact here is that since
equation (\ref{limit_uk_a}) holds, we do not need smoothness.
\end{remark}

\begin{remark}[Interpolation and switching cost]
\label{rem:swipen}
We include here a brief example of the role of the switching cost $c$ in (\ref{switch_1}).
Assume we solve the degenerate equation $u_t = 0$ and write it as the trivial HJB equation
$\sup_{\theta\in \{0,1\}} u^\theta_t = 0$.
We represent $u^0$ and $u^1$ on two different meshes with nodes at
$x^\theta_i = (i+0.5 \, \theta) h$, i.e., shifted by half a mesh size. The fully implicit discretization over
a single timestep is $u^{\theta,n+1} = u^{\theta,n}$  for both controls.
Consider now the situation of convex $(u_i^0)$, such that linear interpolation increases the solution.
Using piecewise linear
interpolation with $c=0$ at the end of the timestep,
\[
u^{1,n+1}_i = (u^{0,n+1}_i + u^{0,n+1}_{i+1})/2,
\]
and similarly for $u^{0,n+1}_i$.
Repeating this for the next timestep, the piecewise constant policy discretization is equivalent to
\[
u^{\theta,n+2}_i = \mathsmaller{\frac{1}{4}} \, u^{\theta,n}_{i-1} + \mathsmaller{\frac{1}{2}} \, u^{\theta,n}_{i} + \mathsmaller{\frac{1}{4}} \, u^{\theta,n}_{i+1}
\quad \Leftrightarrow \quad
\frac{u^{\theta,n+2}_i-u^{\theta,n}_i}{(h/2)^2} = \frac{u^{\theta,n}_{i-1} - 2 u^{\theta,n}_{i} + u^{\theta,n}_{i+1}}{h^2}.
\]
If we pick $\Delta \tau = h^2/8$, this discretization is consistent with the standard heat equation $u_t = u_{xx}$ instead of $u_t = 0$.

With the cost $c>0$ switched on, for sufficiently small $h$, we will have
\[
u^{1,n}_i > (u^{0,n+1}_i + u^{0,n+1}_{i+1})/2 - c,
\]
such that $u^{1,n+1} = u^{1,n}$ and, by induction, the equation $u_t=0$ is solved exactly.

For an overall convergent method, one has to pick $h$ and $\Delta t$ as a function of $c$, 
depending on the interpolation method and smoothness of the solution,
and then let $c \rightarrow 0$.
We discuss this in more detail in Section \ref{sec:num} for concrete examples.

\end{remark}

\begin{remark}[Reduction to standard piecewise constant policy method \cite{krylov99,barlesjakobsen07}]
\label{c_0_error}
Discretization (\ref{dis_1}), with $c=0$, can be viewed as a form of the usual
piecewise constant policy method \cite{krylov99,barlesjakobsen07}. As a result,
if linear interpolation is used to transfer information between grids,
then (\ref{dis_1}) is a monotone discretization of HJB equation (\ref{hjb_1}),
which is easily shown to satisfy the standard requirements for convergence
to the viscosity solution.  
If we use standard finite difference schemes, we would
expect the spatial error (for smooth test functions)
to be of size $O(h)$ with timestepping error of size $O(\Delta \tau)$.
In this case ($c=0)$, if the solution is smooth, we would
expect the total discretization error to be of size
$O(h) + O(\Delta \tau) + O( h^2/\Delta \tau) $, where the last term
arises from a linear interpolation error accumulated $O(1/\Delta \tau)$ times.
Note that this term will be absent in the case $c>0$, since the finite
switching cost will prevent an $O(1/\Delta \tau)$ accumulation
of interpolation error.
\end{remark}

\section{Numerical examples}
\label{sec:num}

In this section, we study the convergence of discretization schemes based on piecewise constant policy timestepping in numerical experiments.
We present two examples, the uncertain volatility model from derivative pricing, and a mean-variance asset allocation problem.
In both examples, we investigate the convergence with respect to the timestep and mesh size.

\label{sec:num-ex-1d}

\subsection{The uncertain volatility model}

We study first the uncertain volatility option pricing model \cite{lyons95}. 
In this example, we  also examine the role of the switching cost
and the impact of different interpolation methods.

The super-replication value of a European-style derivative 
is given by the HJB equation
\begin{equation}
\label{uncertainpde1d}
\fpd{V}{\tau} - \sup_{\sigma \in \Sigma} 
{L}_{\sigma} V = 0,
\end{equation}
where
\begin{equation}
\label{eqnLS}
{L}_{\sigma} V =
\harf  \sigma^2 S^2 \spd{V}{S} 
+ r S \fpd{V}{S}
 - r V
\end{equation}
for $S\in (0,\infty)$, $ \tau \in (0,T]$, with $\tau = T - t$ and
\begin{equation}
\Sigma = [\sigma_{\min},\sigma_{\max}].
\end{equation}
In addition to the PDE, the value function satisfies a terminal condition. 
For the numerical tests, we choose a butterfly payoff function $P$ such that
\[
V(S,0) =  P(S) = \max(S-K_1,0)-2 \max(S-K,0) + \max(S-K_2,0)
\]
and we localize the domain to $[S_{\min},S_{\max}]$ with
\begin{eqnarray*}
   V(S_{\max}, \tau) &= &0 , ~~~~~~(S,\tau) \in \{S_{\max} \} \times (0,T] ~,
              \nonumber \\
   V_{\tau} & = & -rV, ~~(S,\tau) \in \{S_{\min} \} \times (0,T],
\end{eqnarray*}
and parameters as in Table \ref{tab:paramuv}.
\begin{table}
\centering
\begin{tabular}{|c|c|c|c|c|c|c|c|}
\hline 
$r$ & $\sigma_{\min}$ & $\sigma_{\max}$ & $T$ & $K$ & $K_1$ & $K_2$ & $S_0$\\
\hline
$0.05$ & $0.3$ & $0.5$ & $1$ & $100$ & $80$ & $120$ & $100$ \\
\hline
\end{tabular}
\caption{Model parameters used in numerical experiments for uncertain volatility model.}
\label{tab:paramuv}
\end{table}


It is well understood \cite{lyons95} that the optimal control 
is always attained at one of the interval boundaries (`bang-bang'),
depending on the sign of the second derivative of the value function $V$.
The payoff function was chosen such that $V$ has mixed convexity and therefore the optimal control differs between different regions of the state space and changes in time.

We use logarithmic coordinates $X=\log S$, so instead of (\ref{eqnLS}) we approximate
\begin{equation}
\label{eqnLX}
{L}_{\sigma} V =
\harf  \sigma^2 \spd{V}{X} 
+ \left(r-\sigma^2/2\right) \fpd{V}{X}
 - r V,
\end{equation}
and now the coefficients are bounded and $X\in [\log S_{\min}, \log S_{\max}]$,
so Assumption \ref{coeff_assump} is satisfied. 
Here, it is straightforward to construct ``positive coefficient'' schemes for the linear PDE arising when the control set is a singleton. 
A well-established route allows us to construct monotone, consistent and $\ell_{\infty}$
stable 
schemes for the fully non-linear problem from positive coefficient discretization
operators $L^h_q$ using a direct control method of the ``discretize, then optimize'' type, see \cite{forsythlabahn},
\begin{eqnarray}
\label{directcontr}
\min_{j\in\{1,2\}}
\left(  \frac{V^{n+1}-V^n}{\Delta \tau}
- L^h_{q_j} V^{n+1}
\right) = 0,
\end{eqnarray}
where
$q_1 = \sigma_{\min}$ and $q_2 =\sigma_{\max}$.

The resulting non-linear finite dimensional system of equations can be solved by policy iteration (Howard's algorithm, see \cite{bokanowski}).
In particular, we will use standard central finite differences in space and an implicit Euler discretization in time. For small enough $h$, this scheme
is monotone, $\ell_{\infty}$ stable and consistent in the viscosity sense.

We compare the direct control method to several variants of piecewise constant policy timestepping on the basis of these discretizations, as described in Section \ref{sec:pw-const}, specifically equation (\ref{dis_1}). 
Here, only two control values have to be considered and the switching system is two-dimensional. 
In the case of no interpolation,
the scheme simplifies to
\begin{eqnarray}
\label{uvpw}
\frac{ V_j^{n+1} - \max_{k\in \{1,2\} } \left(V_k^{n} - c_{k,j} \right) }{\Delta t}
 & = &  L^h_{q_j} V_j^{n+1} , \qquad j = 1,2, \nonumber \\
&& \qquad c_{k,j} = \begin{cases}
                   c, & k \neq j, \\
                   0, & k = j.
                 \end{cases}
\end{eqnarray}
We again discretize $L_{q_j}$ using a positive coefficient discretization, hence
it is straightforward to verify that Assumption \ref{coeff_assump} and Conditions \ref{assumption:interpolation}--\ref{assumption:consistency} 
hold (on the localized domain $ S \in [ S_{\min},  S_{\max}]$).

A solution extrapolated from the finest meshes was computed as an approximation to the exact solution and used to estimate the errors.
The numerical value of this solution is $ V(S_0,0) = 1.67012$ (see also \cite{pooley}). 
From this, we approximate the error as
\begin{eqnarray}
\label{uverror}
e(h,\Delta \tau) = \vert
V(S_0,0) - \widetilde{U}_1(h,\Delta \tau,c; S_0,0)
\vert,
\end{eqnarray}
where $V$ is the exact solution and $\widetilde{U}_1(h,\Delta \tau, c; \cdot, \cdot)$ the numerical approximation
to $U_1$, the first component of the switching system, for mesh size $h$, timestep $\Delta \tau$,
and switching cost $c$.

\subsubsection*{Dependence on timestep $\Delta \tau$ and switching cost $c$}

We first analyze how the switching cost affects convergence of the approximations.
In Fig.~\ref{fig:costplots}, we compare the following two cases.
\begin{figure}
\includegraphics[width=0.495\columnwidth]{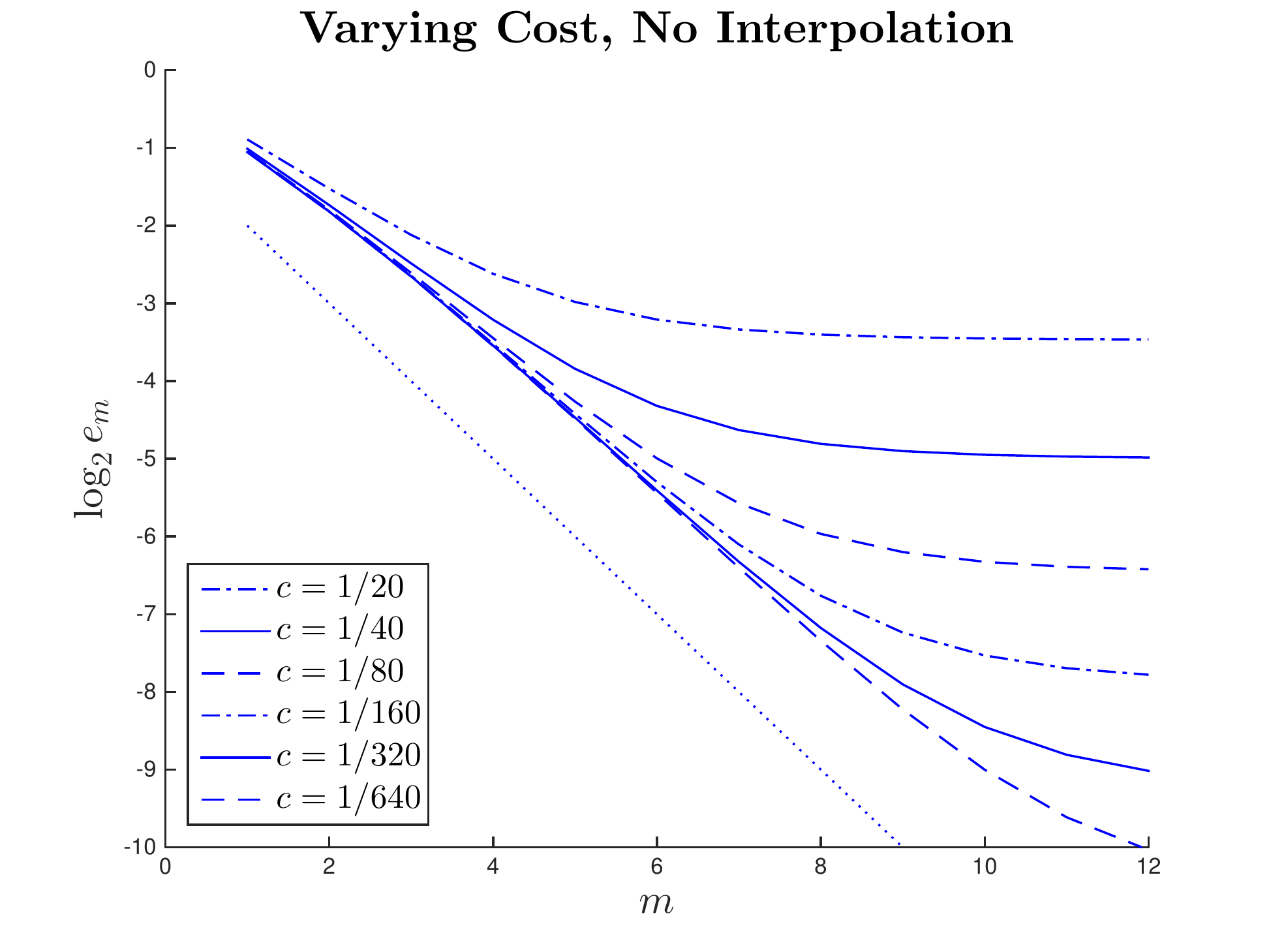}  \hfill
\includegraphics[width=0.495\columnwidth]{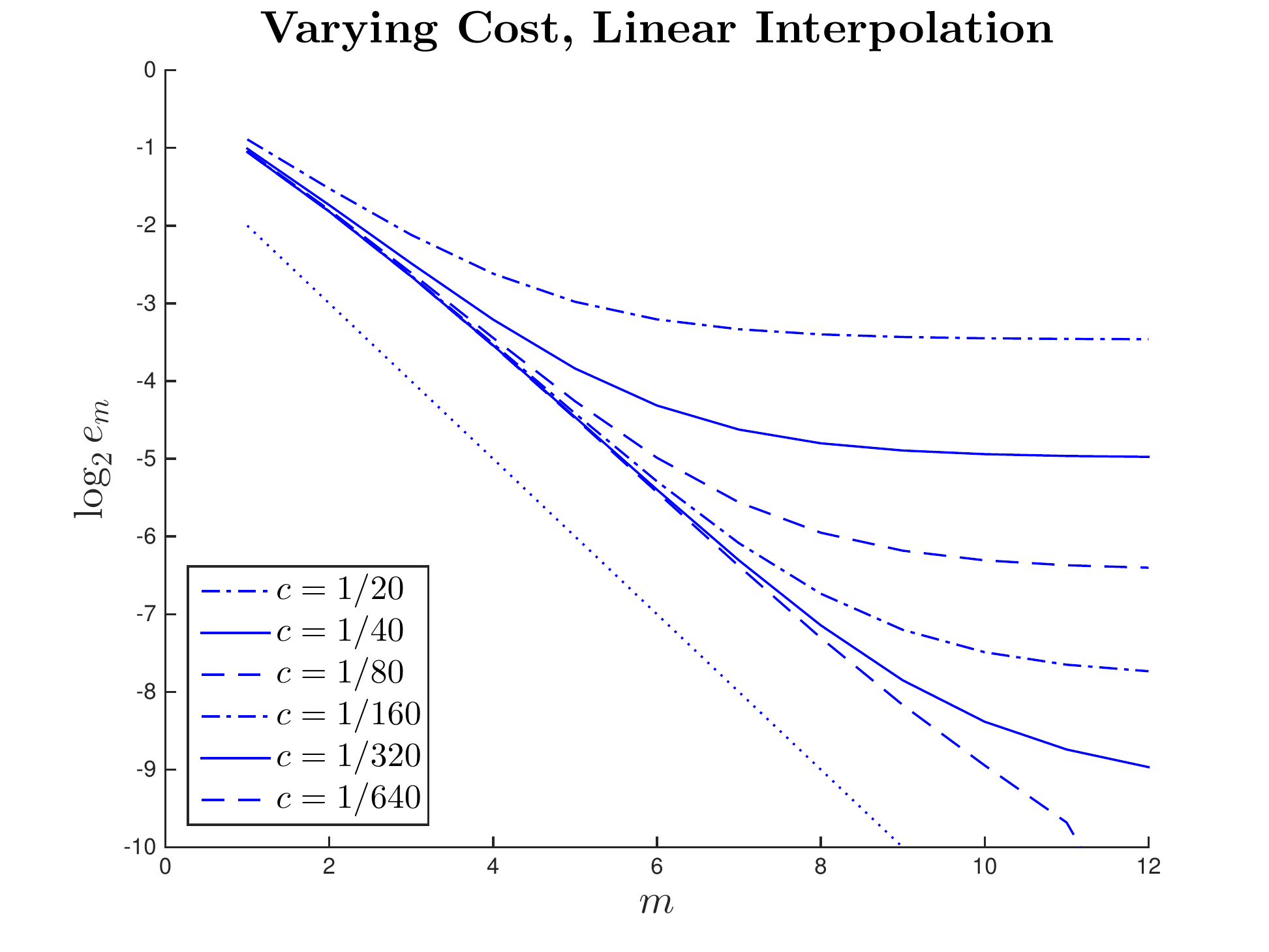} \\
\caption{The uncertain volatility test case with parameters as in Table \ref{tab:paramuv}. Shown is for different methods the $\log_2$ error,
where $e_m = e((\Delta \tau)_m,h,c)$ from (\ref{uverror}) is the error
for timestep $(\Delta \tau)_m=1/8 \cdot 2^{-m} \in \{1/8,\ldots, 1/32768\}$ and
in each plot, from top to bottom, $c=1/20,1/40,1/80,1/160,1/320,1/640$. The mesh size is fixed at $h=1/1024$.
The dotted line has slope $-1$.
Left: Piecewise constant policy timestepping on a single mesh; right: linear interpolation between individual meshes for each control.}
\label{fig:costplots}
\end{figure}

\begin{enumerate}
\item
{\bf Policy timestepping, fixed mesh}:
We use (\ref{uvpw}) on a single uniform mesh on $[\log(K) - 4\cdot \bar{\sigma},
\log(K) + 4\cdot \bar{\sigma}]$, i.e., encompassing four standard deviations either side, where $\bar{\sigma}$ is the average of the two extreme volatilities. 

For fixed switching cost, the error first decreases linearly as the timestep decreases, but eventually converges to a non-zero value.
This convergence is monotone. 
Moreover, for fixed (sufficiently small) $h$, the asymptotic error for $\Delta \tau\rightarrow 0$ is decreasing with $c$.

\item
{\bf Policy timestepping, linear interpolation}:
We also study the use of separate meshes for the two components of the switching system, ${V}_{1}$ and ${V}_{2}$. 
In particular, we use uniform meshes on the intervals $[\log(K) - 4\cdot {\sigma_{\min}}, \log(K) + 4\cdot \sigma_{\min}]$ and
$[\log(K) - 4\cdot {\sigma_{\max}}, \log(K) + 4\cdot \sigma_{\max}]$, so that for the chosen parameters mesh points on the two meshes do not coincide.
Then, interpolation 
is necessary to represent these solutions on both meshes and to evaluate the explicit terms 
(those at time-level $n+1$) in (\ref{uvpw}).
In the case of linear interpolation, 
the overall scheme is monotone, and trivially satisfies Condition \ref{assumption:interpolation}. 
As a result of the switching cost,
the cumulative effect of linear interpolation in each timestep
is controlled even for small $\Delta \tau$ (see Remark \ref{c_0_error}).
\end{enumerate}

Next, we analyze the convergence jointly in $h$ and $\Delta \tau$ for fixed $c$, as well 
as the convergence with respect to $c$ for the case with interpolation.  The results are given in 
Table \ref{tab:uv}.
For fixed positive switching cost $c>0$, we compute approximations on a sequence of time and space meshes with $N_k = 2 N_{k-1}$
and $M_k=2 M_{k-1}$.
The asymptotic ratio of about two
is consistent with an error of $O(h) + O(\Delta \tau)$.

We now study the difference between the solution of the switching system for 
fixed cost $c$ and the solution of the HJB equation (\ref{uncertainpde1d}).
Considering the boldface values in the table as good approximations for this difference, we observe convergence 
as $c\rightarrow 0$ which is roughly consistent with order $3/4$. The theoretically proven order of $1/3$ from Theorem 2.3 in \cite{barlesjakobsen07} does not seem sharp for these data.

The approximation error in $\Delta t$ and $h$ does not appear to be affected by $c$ as long as $c>0$, which is seen by comparison of the lines `(c)' in Table \ref{tab:uv} for $c=1/10$ to $c=1/640$, for small enough mesh parameters (last three columns).
In computations, it would therefore seem prudent to pick $\Delta t = O(h)$
and $c = O(h^{4/3})$ i.e., proportional errors,
even though faster, but more erratic convergence is obtained setting $c=0$ uniformly.

\begin{table}
\centering
{\small
\begin{tabular}{|c|c|c|c|c|c|c|c|c|c|}
\hline
$N_k$&  & $32$ & $64$ & $128$ & $256$ & $512$ & $1024$ & $2048$ & $4096$ \\
$M_k$ &  & $512$ & $1024$ & $2048$ & $4096$ & $8192$ & $16384$ & $32768$ & $65536$ \\
\hline \hline
$c$ &&&&&&&&& \\
\hline
$1/10$ & (a) & 2.0692  &  1.9724   &  1.9660   & 1.9532  &   1.9491  &  1.9478   & 1.9474  &  1.9472 \\ 
&(b)& 0.3991  &  0.3023  &  0.2958  &  0.2831  &  0.2789  &  0.2777   & 0.2773  &  {\bf 0.2771} \\
& (c) &&  -0.0968  & -0.0065  & -0.0127  & -0.0042  & -0.0012 &  -0.0004 &  -0.0002 \\
& (d) &&& 14.9669  &  0.5079  &   3.0621  &  3.4257  &  2.8297  &  2.3076 \\ \hline
$1/40$ & (a) & 1.6126  &  1.5441  &  1.7406  &  1.7663  &  1.7623  &  1.7612   & 1.7608  &  1.7606 \\
&(b)& -0.0575 &  -0.1260   & 0.0705 &   0.0961  &  0.0922  &  0.0910  &  0.0906   & {\bf 0.0905} \\
&(c)&& -0.0685  &  0.1965  &  0.0256 &  -0.0040 &  -0.0011 &  -0.0004  & -0.0002 \\
&(d)&&& -0.3486  &  7.6692  & -6.4629  &  3.5076  &  2.7965  &  2.3037  \\ \hline
$1/160$ &(a)& 1.1989  &  1.2205  &  1.5510  &  1.7019  &  1.7033  &  1.7022  &  1.7018  &  1.7016 \\
&(b)& -0.4712 &  -0.4496 &  -0.1191  &  0.0318  &  0.0331  &  0.0320  &  0.0317  &  {\bf 0.0315} \\
&(c)&& 0.0216  &  0.3305  &  0.1509  &  0.0013  & -0.0011 &  -0.0004 &  -0.0002 \\
&(d)&&& 0.0653  &  2.1902 & 112.7836  & -1.2152  &  2.7969  &  2.2994  \\ \hline
$1/640$ &(a)& 0.9256  &  0.9897  &  1.3752   & 1.6448  &  1.6833  &  1.6822  &  1.6818  &  1.6816 \\
&(b)& -0.7446 &  -0.6804 &  -0.2949  & -0.0254  &  0.0132  &  0.0121  &  0.0117  & {\bf 0.0115} \\
&(c)&& 0.0641  &  0.3855  &  0.2696  &  0.0385 &  -0.0011  & -0.0004 &  -0.0002 \\
&(d)&&& 0.1664  &  1.4301  &  6.9932  & -35.1108  &  2.7835  &  2.3009   \\ \hline
$0$ &(a)& 0.8271  &  0.7221  &  1.0227  &  1.4109   & 1.6654  & 1.6702 &   1.6703  &  1.6702 \\
&(b)& -0.8430 &  -0.9480 &  -0.6474  & -0.2592 &  -0.0047  &  0.0001  &  0.0002  &  0.0001 \\
&(c)&& -0.1050  &  0.3005  &  0.3882  &  0.2545  &  0.0048  &  0.0001 &  -0.0001 \\
 &(d)&&& -0.3494  &  0.7742  &  1.5252  & 53.3568  & 39.0446 & -1.6580 \\ \hline
\end{tabular}
}
\caption{The uncertain volatility test case with parameters as in Table \ref{tab:paramuv};
piecewise constant policy timestepping with linear interpolation between individual meshes for each control;
convergence with respect to mesh parameters and switching cost. Shown are:
(a) the numerical solution $V_k=\widetilde{V}(N_k,M_k,c;S_0,0)$;
(b) the difference to the exact solution $V_k-V(S_0,0)$;
(c) the increments $V_k-V_{k-1}$;
(d) the ratios of increments $(V_{k}-V_{k-1})/(V_{k-1}-V_{k-2})$.}
\label{tab:uv}
\end{table}

\subsubsection*{Dependence on timestep $\Delta \tau$ and mesh size $h$}

In Fig.~\ref{fig:meshcostplots}, we show the convergence in both the timestep and mesh size for two different costs,
$c=0.01$ and $c=0.16$. Compare this to the case $c=0$ in Fig.~\ref{fig:1dplots} (middle row, left).
\begin{figure}
\includegraphics[width=0.495\columnwidth]{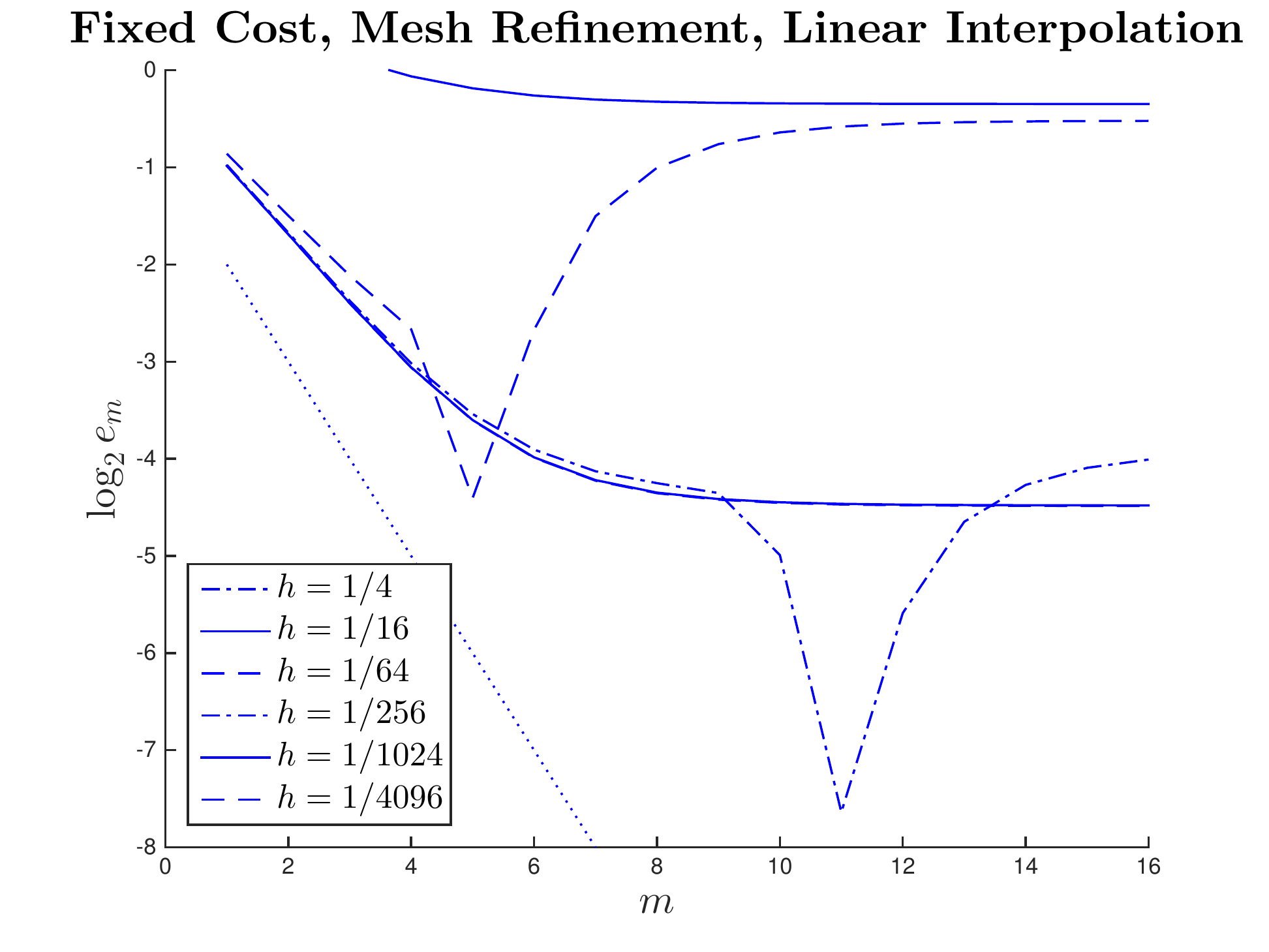}  \hfill
\includegraphics[width=0.495\columnwidth]{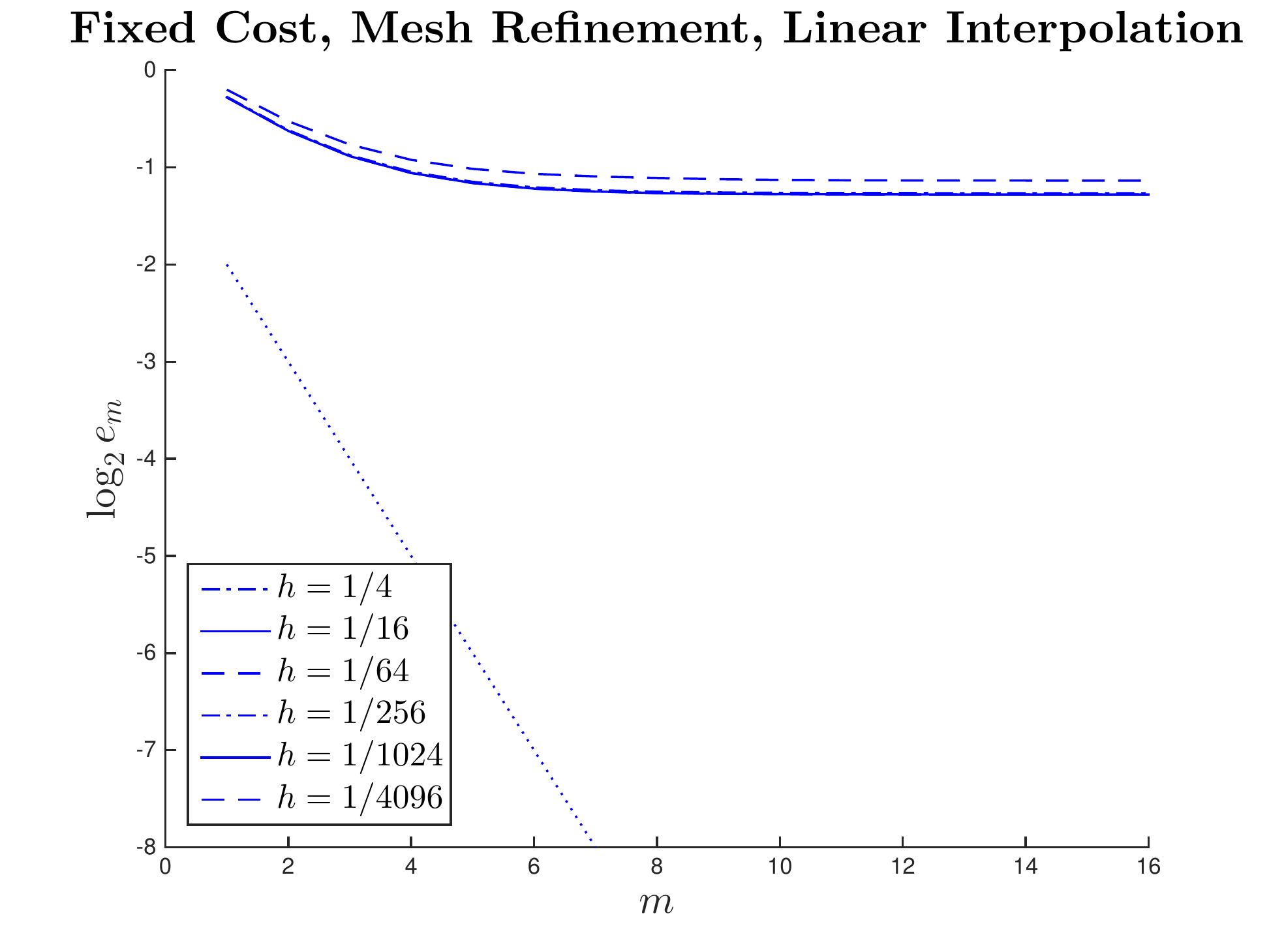} \\
\caption{
The uncertain volatility test case with parameters as in Table \ref{tab:paramuv}.
Piecewise constant policy timestepping with linear interpolation between individual meshes for each control.
Shown is the $\log_2$ error,
where $e_m = e((\Delta \tau)_m,h,c)$ from (\ref{uverror}) is the error
for timestep $(\Delta \tau)_m=1/8 \cdot 2^{-m} \in \{1/8,\ldots, 1/524288\}$ and
in each plot, from top to bottom, $h=1/4,1/16,1/64,1/256,1/1024,1/4096$. The cost size is fixed at $c=0.01$ (left) 
and $c=0.16$ (right).
The dotted line has slope $-1$.
The downward spikes are a result of error cancellation for a particular combination of $h$, $\Delta \tau$ and $c$.
}
\label{fig:meshcostplots}
\end{figure}
We can make a number of observations.
For large switching cost (right plot), the difference between the switching system and the HJB equation is large
and dominates the discretization error.
For fixed $c$ and $h$, there appears to be convergence as $\Delta \tau \rightarrow 0$, and if we also let $h\rightarrow 0$
the solutions converge to the solution of the switching system with fixed $c$.
For comparable values for $h$ and $\Delta \tau$, there appears to be a cancellation of leading order errors in $h$ and $\Delta \tau$ with opposite signs, which appears as downward spikes in the left and middle plot.

In this particular case of linear interpolation, 
since the overall scheme is monotone, convergence to the
viscosity solution of (\ref{uncertainpde1d}) is ensured
if $c=0$
as long as  the discretization is consistent.
Recall from Remark \ref{c_0_error}, that (for smooth
solutions) the discretization  error is of the form 
$O(\Delta \tau)$ + $O(h^2)$ + $O(h^2/\Delta \tau)$,
with the third term being the cumulative effect of linear
interpolation over $O(1/\Delta \tau)$ timesteps. 
Hence we can ensure consistency by requiring
that  $\Delta \tau = O(h)$.

We now analyze in more detail the convergence in $\Delta \tau$ and $h$ for 
the degenerate case with $c=0$.
The computational results are shown in Fig.~\ref{fig:1dplots} and are discussed in the following list.
\begin{enumerate}
\item
{\bf Policy timestepping, fixed mesh}:  We use (\ref{uvpw}) on a single uniform mesh on $[\log(K) - 4\cdot \bar{\sigma},
\log(K) + 4\cdot \bar{\sigma}]$, i.e., encompassing four standard deviations either side, where $\bar{\sigma}$ is the average of the two extreme volatilities. 
For fixed mesh size, the error approaches a constant level for decreasing time-step, but for simultaneously diminishing mesh size the observed time discretization error is clearly of first order in the timestep.

\item
{\bf Direct control}: Here, the optimal control is implicitly found with the solution as described above -- see, in particular, (\ref{directcontr}) -- and therefore we can disentangle the Euler discretization error from the effect of piecewise constant control.
Comparing the envelope to the curves, parallel to the dotted line with slope minus one, shows first order convergence as in the previous case,
but with a lower intercept which indicates that the time discretization error is about a factor 4 smaller.
Although the number of policy iterations per time-step was consistently small (usually 2--4), the computational time here was dramatically larger (due to the need to generate new matrices in each iteration)
and therefore solutions could not be computed for the same number of timesteps as for the other cases.

\item
{\bf Policy timestepping, linear interpolation}:
\label{pollinint}
Again, first order convergence in the timestep is observed, however, 
the leading error terms are, from Remark \ref{c_0_error}, of the form $ O(h) +  O(\Delta \tau) + O(h^2/\Delta \tau)$.
As a result,  
convergence is ensured only if $h$ goes to zero faster 
than $\sqrt{\tau}$, and for $h\sim \Delta \tau$ first order convergence is expected.
Because of the maximum norm stability and linear interpolation, 
the error does not explode even as $ \Delta \tau \rightarrow 0$ for fixed $h$. 
In fact, the solution goes to zero here as the interpolation 
introduces increasing artificial diffusion (see also Remark \ref{rem:swipen}) 
and the solution is absorbed at the boundaries.

\item
{\bf Policy timestepping, linear interpolation, reference mesh}:
Although not an issue for $J=2$ control parameters, if the dimension of the switching system is $J$, the number of interpolations from each mesh onto all other meshes is an $O(J^2)$ operation. We can avoid this using a single `reference mesh' to keep track of the solution.
So in addition to the two meshes associated with ${V}_{1}$ and ${V}_{2}$ as above under item \ref{pollinint}., we introduce a reference mesh, uniform on $[\log(K) - 4\cdot \bar{\sigma}, \log(K) + 4\cdot \bar{\sigma}]$, and $\widetilde{V}_{k,i}$ is constructed by linear interpolation from $V_k$ onto the reference mesh and then onto the $i$-th mesh point of the $j$-th mesh. With now two interpolations for each solution every timestep, convergence is still of first order but with a significantly higher factor than with direct interpolation between meshes. The number of interpolations needed for a $J$-dimensional switching system is now $O(J)$. 

\item
{\bf Policy timestepping, cubic interpolation}:
Finally, to reduce the accumulated interpolation error, we use the possibility of limited higher order interpolation for the mesh transfer afforded to us by Condition \ref{assumption:interpolation}, first without the use of a reference mesh. 
In particular, we use monotone piecewise cubic Hermite interpolation as in \cite{fritsch}. 
Note that the interpolation \cite{fritsch} is monotonicity preserving
but not  monotone 
in the viscosity sense \cite{barlessouganidis}.  
However,  Condition \ref{assumption:interpolation} is satisfied. 
Note that we use $c=0$ in this test case, which, strictly speaking,
does not ensure convergence to the viscosity solution.  However,
it is clear from Figure \ref{fig:costplots} that the limiting case of
$c \rightarrow 0$ does in fact converge to the viscosity solution,
hence it is interesting to include the case $c \equiv 0$.

The approximation order of the interpolation method
in \cite{fritsch} is guaranteed to be cubic only 
if the data are in fact monotone, and this is not the case for our initial data.
Nonetheless, the error is significantly reduced compared to the linear interpolation case.
\item
{\bf Policy timestepping, cubic interpolation, reference mesh}:
The results with cubic interpolation onto a reference mesh are not as accurate as for direct cubic interpolation between the computational meshes,
and have a similar accuracy to the results for linear interpolation without reference mesh.
\end{enumerate}

\begin{figure}
\includegraphics[width=0.495\columnwidth]{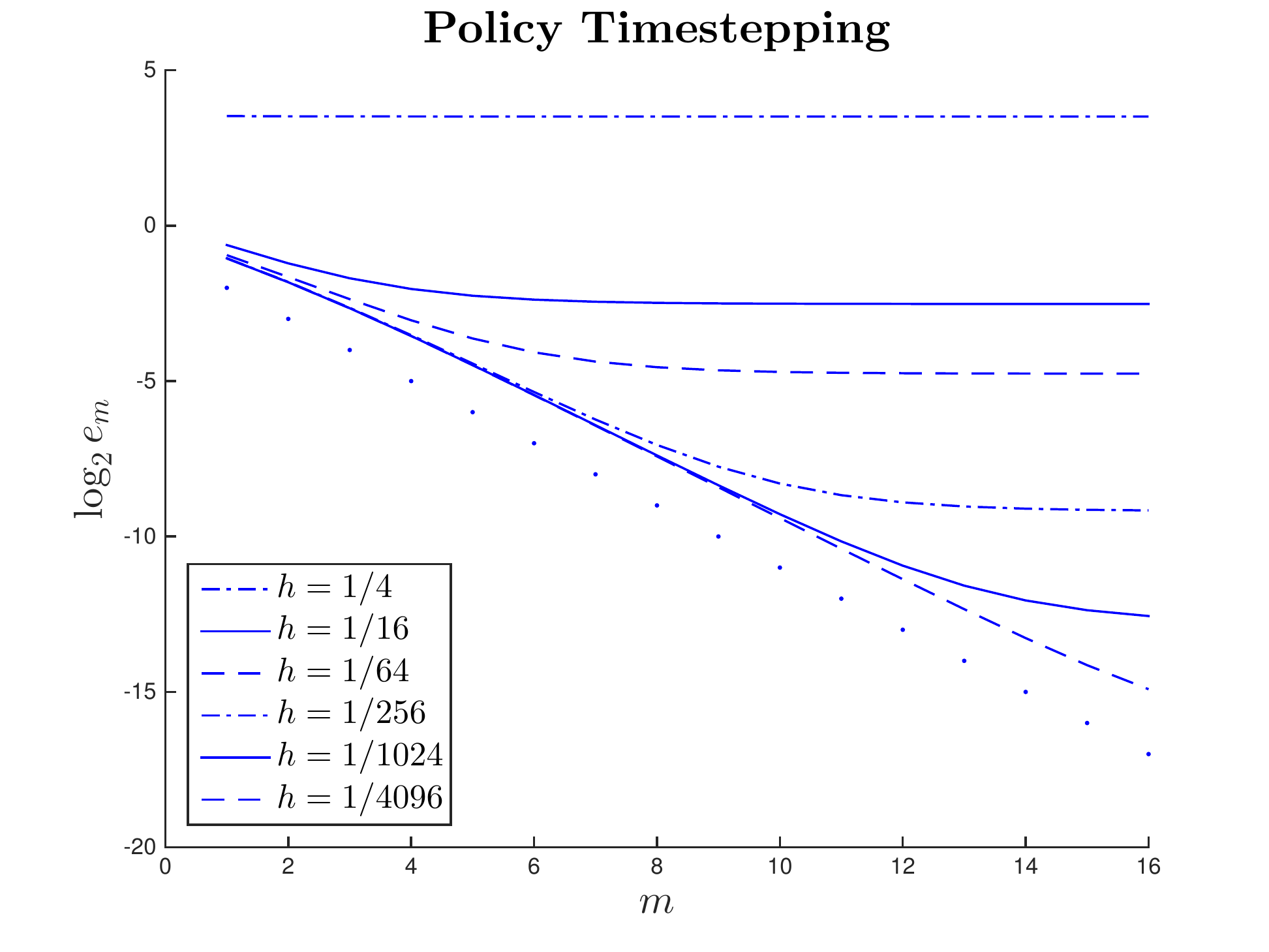}  \hfill
\includegraphics[width=0.495\columnwidth]{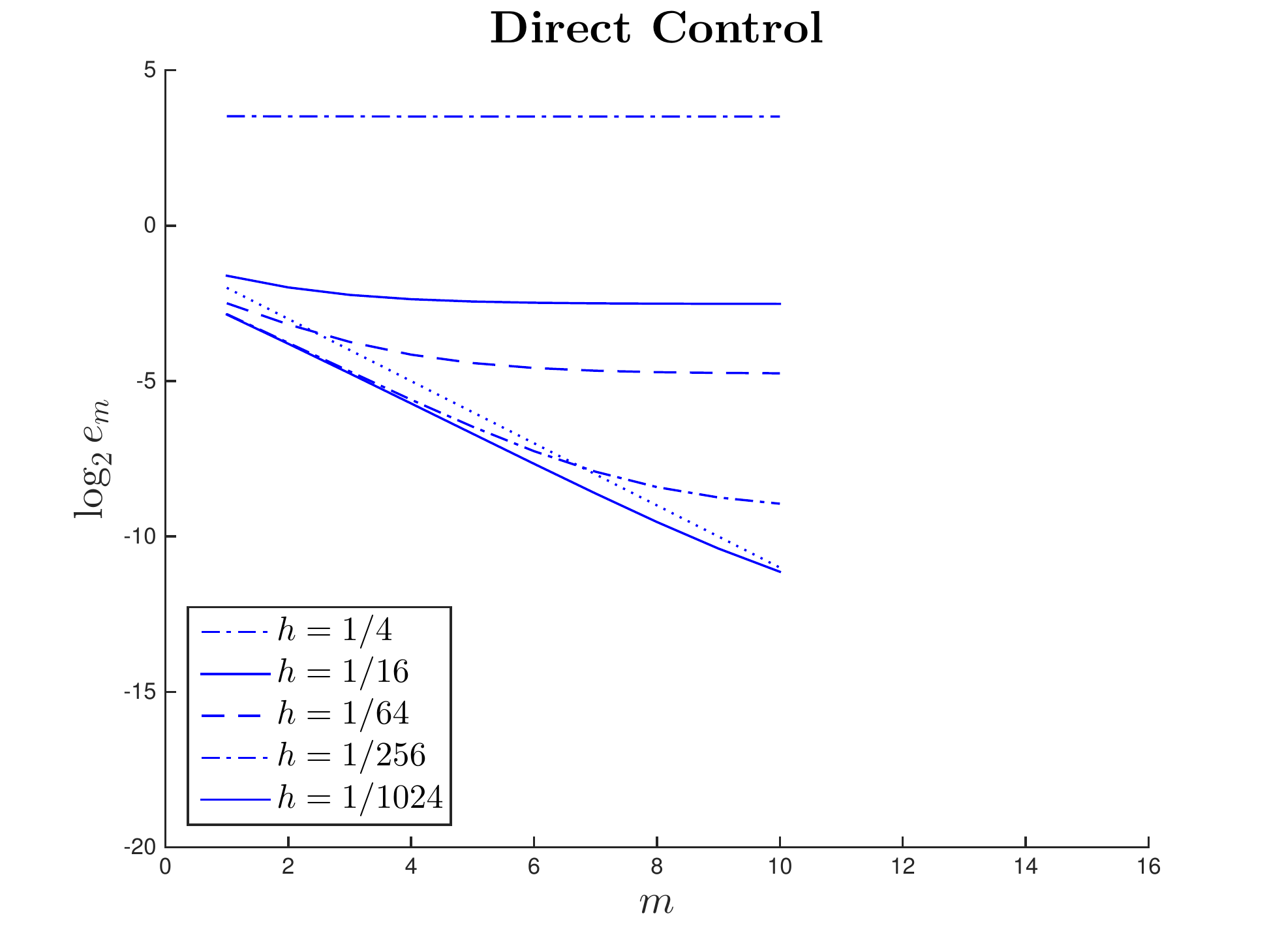} \\
\includegraphics[width=0.495\columnwidth]{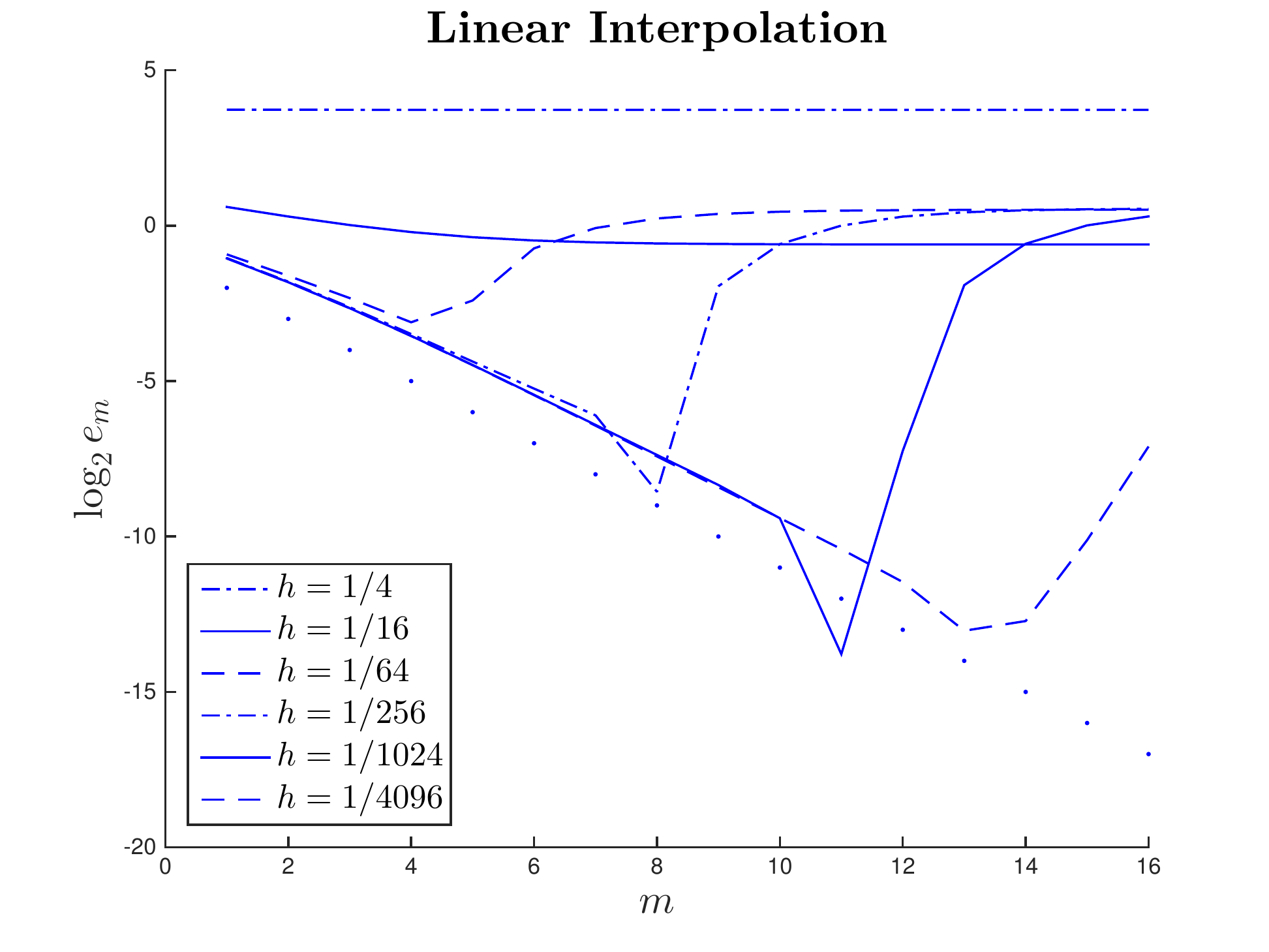} \hfill
\includegraphics[width=0.495\columnwidth]{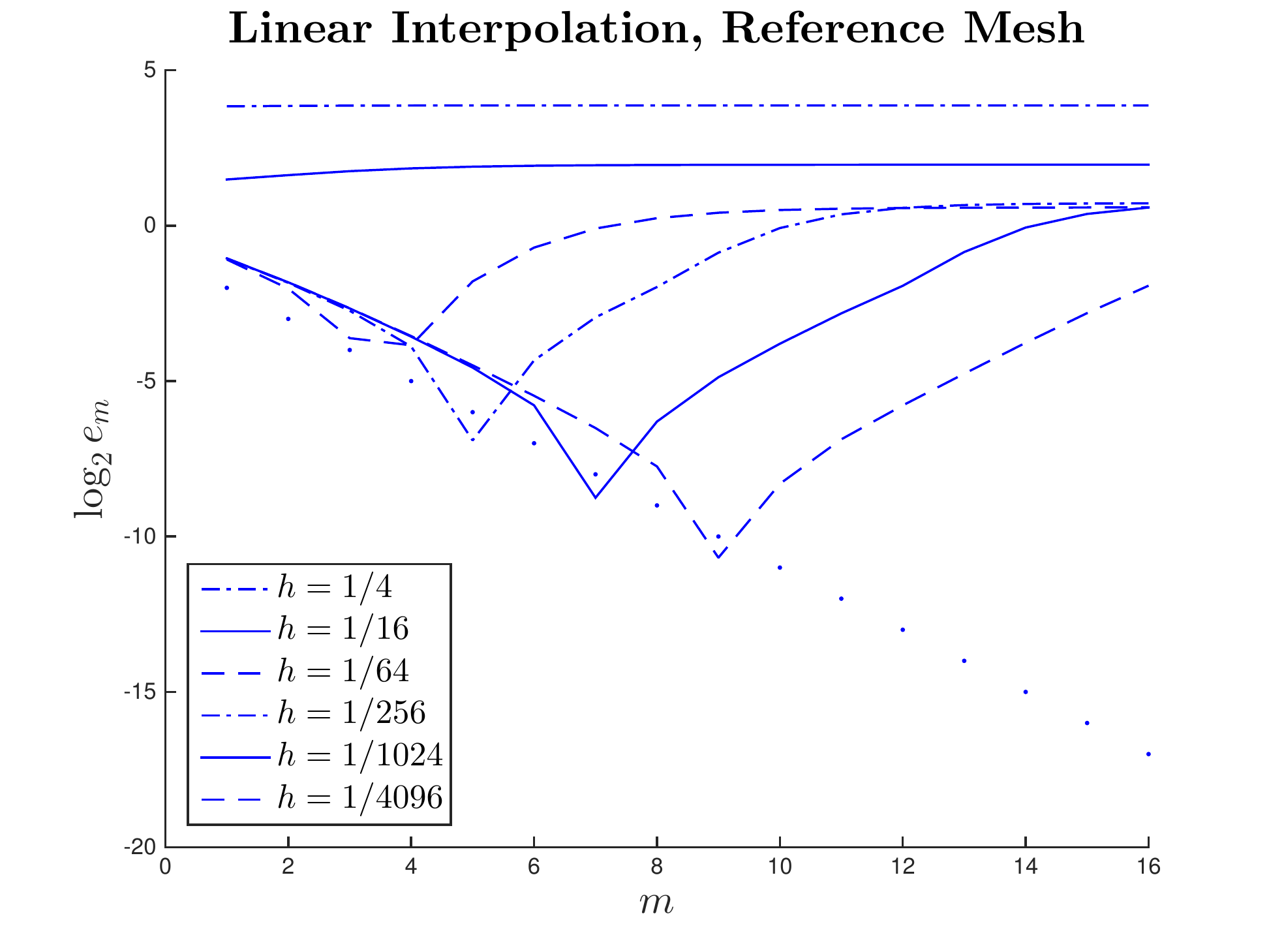} \\
\includegraphics[width=0.495\columnwidth]{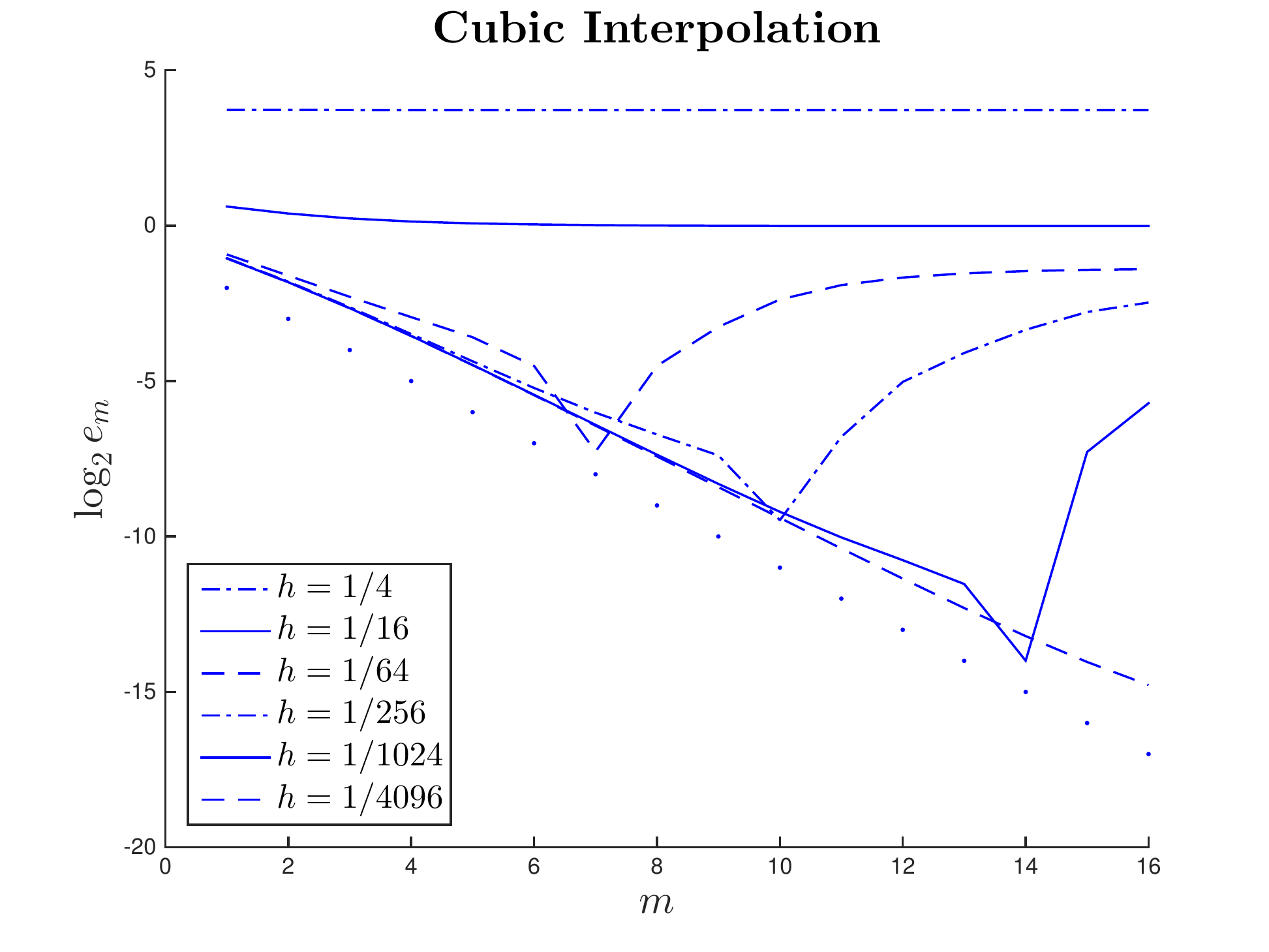} \hfill
\includegraphics[width=0.495\columnwidth]{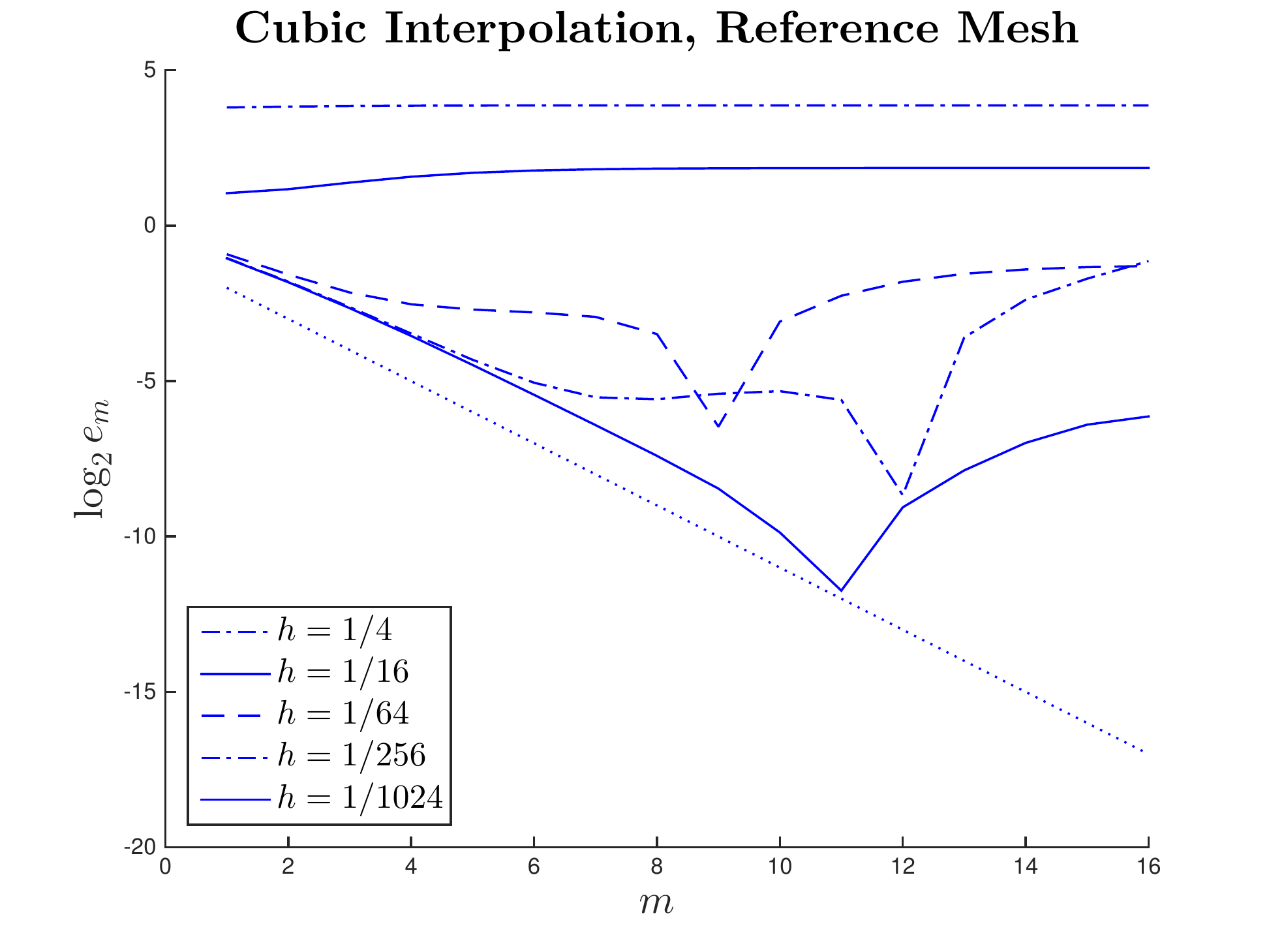}
\caption{The uncertain volatility test case with parameters as in Table \ref{tab:paramuv}. Shown is for different methods the $\log_2$ error,
where $e_m = e((\Delta \tau)_m,h)$ from (\ref{uverror}) is the error
for timestep $(\Delta \tau)_m=1/8 \cdot 2^{-m} \in \{1/8,\ldots, 1/524288\}$ and in each plot, from top to bottom, $h=1/4,1/16,1/64,1/256,1/1024,1/4096$.
The dotted line has slope $-1$.
The plots refer to, from top left lexicographically: the piecewise constant time-stepping method on a fixed mesh; the direct control method;
the piecewise constant time-stepping method with: linear interpolation; linear interpolation onto a reference mesh; cubic interpolation;
cubic interpolation onto a reference mesh.}
\label{fig:1dplots}
\end{figure}

\subsection{Mean-variance asset allocation}

As a second example we study the mean-variance asset allocation problem as discussed in
\cite{forsythwang10}, following the embedding technique introduced in \cite{ling00,zhouli00}.
In this example, we use the same grids for each constant policy mesh,
and focus on the effects of discretization of the control.
This example demonstrates that  piecewise constant policy 
timestepping does not introduce any significant extra error 
compared to first order Euler timestepping with either known 
optimal control, or a numerical optimal control obtained implicitly from the finite difference scheme.
We will also see that even a fairly coarse discretization of the
control admissible set yields good results. We have seen this property
of discretized controls in many examples.

The method determines the pre-commitment mean variance
optimal strategy \cite{Basak08}.
Note that it is possible to develop a numerical
method for solution of the
time-consistent version of this
problem \cite{wang2011}. However, since the time consistent
problem can be viewed as a constrained solution of the  pre-commitment
problem, the time consistent solution is sub-optimal compared to the
pre-commitment solution.
Specifically, we consider here the sub-problem given by the equation
\begin{eqnarray}
\label{meanvar1d}
\fpd{V}{\tau} - \inf_{p \in P} 
\mathcal{L}^{p} V &=& 0, \\
V(W,0) &=& \left(W-\frac{\gamma}{2}\right)^2,
\label{meanvarbc}
\end{eqnarray}
on $(-\infty,\infty)$, and $(0,\infty)$, with 
\begin{equation}
\label{meanvarL}
{L}^{p} V =
\harf \sigma^2 p^2 W^2 \spd{V}{W} 
+ (\pi + W (r + p \sigma \xi)) \fpd{V}{W}
\end{equation}
and either $p \in (-\infty,\infty)$ or $p \in [0,p_{\max}]$. 
Observe that (\ref{meanvar1d}) does not satisfy Assumption \ref{coeff_assump}
since $p$ can be unbounded.  We will re-parameterize the  control variable to
avoid this problem.

By solving equation (\ref{meanvar1d}--\ref{meanvarbc}) for various values
of the parameter $\gamma$, we can trace out the efficient frontier
in the expected value, variance plane \cite{forsythwang10}.

We use the standard finite difference discretization and make ``maximum'' use of central differences \cite{forsythwang08} whenever a positive coefficient scheme is achieved and use upwind differences only where necessary for monotonicity.


The PDE (\ref{meanvar1d}--\ref{meanvarbc}) is specified on
an infinite domain.  For numerical purposes, we approximate this
by means of a localized problem, with approximate boundary
conditions at finite values of $|W|$.
We use an asymptotic approximation of $V$
for large $|W|$.
In the cases we consider, an asymptotic value for the optimal control is more directly available than for the value function. Therefore, the following  solution under constant control will be applied as approximate boundary condition.
More precisely, the solution to the PDE
\begin{eqnarray}
\label{linpde}
\fpd{V}{\tau} - \harf a^2 W^2 \spd{V}{W} 
+ (\pi + b W) \fpd{V}{W} = 0
\end{eqnarray}
with terminal condition (\ref{meanvarbc}) is given by
\begin{eqnarray}
\label{linpdesol}
V(W,\tau) &=& \alpha(\tau)  W^2 +  \beta(\tau)  W +  \delta(\tau),
\end{eqnarray}
where $\tau=T-t$ and
\begin{eqnarray*}
\alpha(\tau)  &=& \exp((a^2+2b)\tau), \\
\beta(\tau)  &=& -(\gamma+c) \exp(b\tau) + c \exp((a^2+2b)\tau), \\
\delta(\tau)  &=& -\frac{\pi (\gamma+c)}{b} \left(\exp(b\tau)-1\right) + \frac{\pi c}{a^2+2b} \left(\exp((a^2+2b)\tau)-1\right) + \frac{\gamma^2}{4},
\text{ where}\\
c &=& 2 \pi/(a^2+b).
\end{eqnarray*}
In comparison to \cite{forsythwang10}, who only derive the highest-order term, this gives an asymptotically more accurate approximation and allows us to use substantially smaller domains for the computation.
Following \cite{forsythwang10}, we use the parameters in Table \ref{tab:parammeanvar} throughout.
\begin{table}
\centering
\begin{tabular}{|c|c|c|c|c|c|c|c|}
\hline 
$r$ & $\sigma$ & $\xi$ & $\pi$ & $W_0$ & $T$ & $\gamma$ & $\lambda$\\
\hline
$0.03$ & $0.15$ & $0.33$ & $0.1$ & $1$ & $20$ & $14.47$ & $1.762$ \\
\hline
\end{tabular}
\caption{Model parameters used in numerical experiments for mean-variance problem.}
\label{tab:parammeanvar}
\end{table}

%

We study two different cases for the permissible sets for state-variable and controls, one where
$W,p \in \mathbb{R}$, and one where $W\ge 0$, $p\in [0,p_{max}]$.

\subsubsection*{Bankruptcy allowed, unbounded control}

If bankruptcy ($W<0$) is allowed, the PDE (\ref{meanvar1d}--\ref{meanvarbc}) holds on $(-\infty,\infty)$.
In this case, a closed-form solution is known from \cite{hojgaardvigna}, where the optimal policy is given by
\begin{eqnarray}
\label{optpolanalyt}
p^\star(W,t) = - \frac{\xi}{\sigma W}\biggl[ W - \biggl( \frac{\gamma e^{ -r(T-t)} }{2} - \frac{\pi}{r} (1 - e^{-r(T-t)})
                                                 \biggl)
                                 \biggr].
\end{eqnarray}
Moreover, under this optimal policy, we find from the formulae in \cite{hojgaardvigna},
\begin{eqnarray*}
Var[W_T] &=& \frac{ e^{-\xi^2T}}{ 1 - e^{-\xi^2T} } 
            \biggl[
               E[W_T] - \biggl(
                             W_0 e^{rT} + \frac{\pi (e^{rT} - 1 ) }{r}
                        \biggr)
            \biggr]^2, \nonumber \\
E[W_T] &=& \biggl( W_0 + \frac{\pi}{r} \biggr) e^{ - (\xi^2-r)T} + \frac{\gamma ( 1 - e^{-\xi^2T})}{2} -\frac{\pi}{r} e^{-\xi^2T},
\end{eqnarray*}
such that $(\sqrt{Var[W_T]}, E[W_T]) = (0.794, 6.784)$, and
$E[(W_T-\gamma/2)^2] = Var[W_T] + E[W_T]^2 - \gamma E[W_T] + \gamma^2/4 = 0.8338$
for the parameters in Table \ref{tab:parammeanvar}.

As the optimal policy in the form (\ref{optpolanalyt}) is unbounded, we perform the control discretization in a different control variable. Noting that $p^* W$ is bounded as $W\rightarrow 0$, it seems natural to consider $p W$ as control variable in this area; however, $p^* W \sim -\xi/\sigma W$ as $|W|\rightarrow \infty$. This leads us to consider
\begin{eqnarray}
\label{contrtrans}
q = \frac{p W}{\max(1,\omega |W|)}
\end{eqnarray}
as control variable for some $\omega>0$, and
\begin{equation}
\label{meanvarLq}
\widetilde{\mathcal{L}}^{q} V =
\harf \sigma^2 q^2 \max(1,\omega^2 W^2) \spd{V}{W} 
+ (\pi + W r + q \max(1,\omega |W|) \sigma \xi) \fpd{V}{W}.
\end{equation}

The optimal control $q^*(W,t)$ will be bounded 
on a localized domain,
and we fix an interval $Q=[q_{min},q_{max}]$ in which we search for the optimal control by a crude approximation.
In this whole process, a precise knowledge of the exact optimal control is not necessary, as we only use the rough asymptotic shape. 
For the computations below, we pick $\omega=5$ and $Q=[-2.5,3.5]$.  Since we solve the
PDE on a localized domain ($|W|$ bounded), and the control is now
bounded as well, the localized version of  equations (\ref{meanvar1d}--\ref{meanvarbc})
now satisfies Assumption \ref{coeff_assump}.

We note that this is an example where the optimal control is an unbounded function of the state variable, but by a suitable reformulation the piecewise constant policy timestepping method can still be applied, with the policy chosen from a bounded set. 

From (\ref{optpolanalyt}) one sees that $q^*(W,t) \rightarrow -\xi/\sigma$ for $|W|\rightarrow \infty$.
Therefore, asymptotically, (\ref{meanvar1d}), (\ref{meanvarL}) takes the form (\ref{linpde}) with suitable $a$ and $b$, obtained by inserting the constant asymptotic optimal policy.
We can then use the asymptotically exact boundary conditions (\ref{linpdesol}) 
for both $W_{max}$ and $W_{min}$.
We choose $W_{max}=40$ and $W_{min}=-40$ in the computations.

The discretized switching system has the form
\begin{eqnarray}
\label{meanvarpw}
\frac{ V_j^{n+1} - \min_{1\le k\le J}(V_k^{n} - c_{k,j} ) }{\Delta \tau} - L^h_{q_j} V^{n+1}_j = 0, \quad j=1,\ldots, J,
\end{eqnarray}
where $c_{k,j}$ is defined as in equation (\ref{uvpw}).
In this case, we can set the switching parameter $c_{k,j}=0$  
since no interpolation is used, and this reduces to conventional
piecewise constant policy timestepping \cite{krylov99}.
Then the numerical approximations to all $J$ components of the switching system are the same in each timestep after the minimum is taken.

Fig.~\ref{fig:valandpolicyplots} shows the value function $V$ and its asymptotic approximation for large $|W|$.
The two functions have visually identical tangents at the boundaries, and indeed experimentation with the values of $W_{min}$ and $W_{max}$ shows that the results around $W=1$ are not significantly affected by this approximation.

Also shown in Fig.~\ref{fig:valandpolicyplots} is the approximate optimal policy obtained 
numerically from the policy timestepping discretization
with 20 policy steps, and the exact formula (\ref{optpolanalyt}), transformed into a bounded control as per (\ref{contrtrans}).

\begin{figure}
\includegraphics[width=0.495\columnwidth]{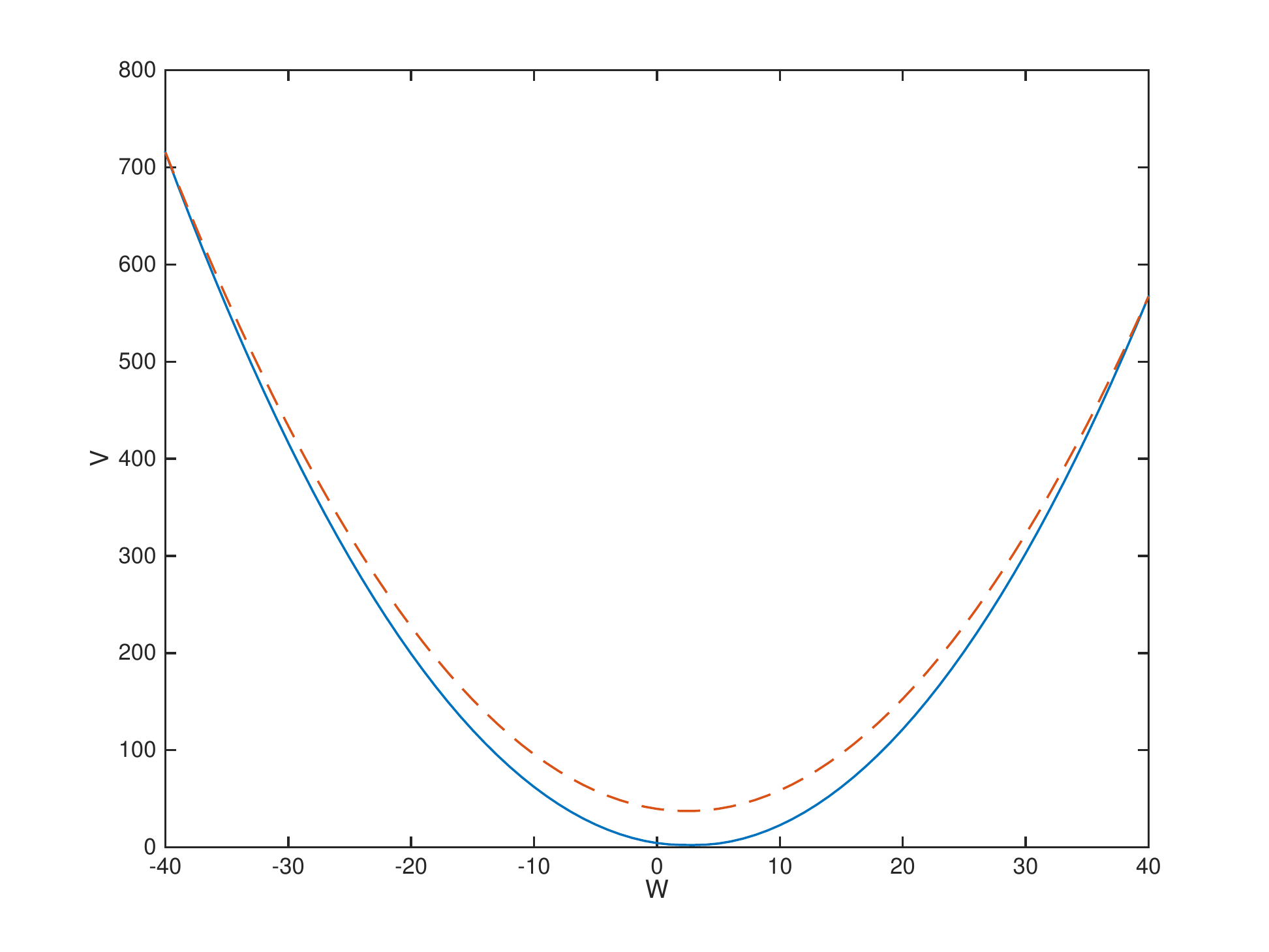}  \hfill
\includegraphics[width=0.495\columnwidth]{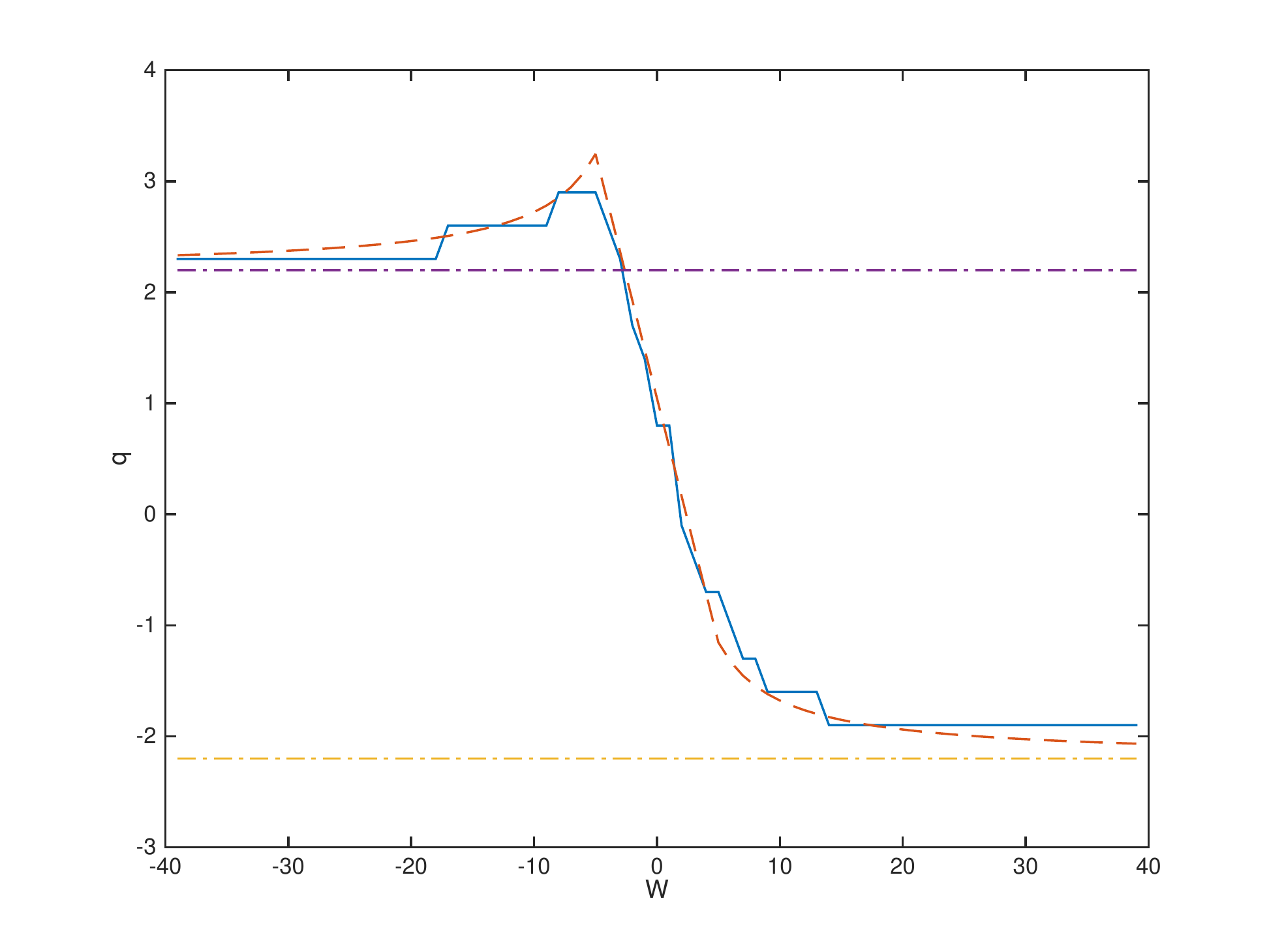}
\caption{
The mean-variance test case with parameters as in Table \ref{tab:parammeanvar}.
Left: The numerical approximation to $V(W,0)$ for piecewise constant policies for $M=80$, $N=80$ and 20 policy steps.
The dashed line is the asymptotic approximation for $|W|\rightarrow \infty$.
Right: The corresponding approximation to the optimal policy $q(W,0)$, and the analytical optimal policy (\ref{optpolanalyt}). The horizontal lines
are at the asymptotic optimal policies for $|W|\rightarrow \infty$.}
\label{fig:valandpolicyplots}
\end{figure}

Table \ref{tab:contrdiscrmeanvar} illustrates the convergence as the control mesh is refined for a fixed time and spatial mesh.
\begin{table}
\centering
\small
\begin{tabular}{|c|c|c|c|c|c|c|c|c|c|}
\hline
&$J=5$ & $J=8$ & $J=10$ & $J=15$ & $J=20$ & $J=29$ & $J=40$ & $J=57$ & $J=80$ \\
\hline \hline
(a)&2.257 & 1.531 & 1.429 &1.254 &1.230 &1.196 &1.186 &1.180 &1.178 \\
(b)&&-0.725 &   -0.101 &   -0.175 &   -0.0241 &  -0.0339 &  -0.0104  & -5.4 $\cdot 10^{-3}$ &  -2.5 $\cdot 10^{-3}$ \\
(c)&&& 7.12 &    0.58 &  7.26 &   0.71 &   3.25 &   1.92 &    2.16 \\ \hline
\end{tabular}
\caption{The mean-variance test case with parameters as in Table \ref{tab:parammeanvar}.
Convergence of the control discretization alone for $N=480$, $M=120$. Shown are:
(a) the numerical solution $V_k = \tilde{V}(N,M,J_k;W_0,0)$; 
(b) the increments $V_{k}-V_{k-1}$; 
(c) the ratios $(V_{k}-V_{k-1})/(V_{k-1}-V_{k-2})$,
 for $J_k = \Big\lceil  5 \cdot \sqrt{2}^{k-1} \Big\rceil$.}
\label{tab:contrdiscrmeanvar}
\end{table}
The estimated order of convergence over these refinement levels is 2.
We pick $J=40$ fixed for the following tests of the convergence 
in the mesh size and timestep. 
For this value, the control discretization error was empirically negligible (compared to the time and spatial discretization error).

Fig.~\ref{fig:MVconv} shows the convergence of the approximations for piecewise constant policy timestepping and for the use of the exact policy 
given by (\ref{optpolanalyt}). In the latter case, the error is solely due to the Euler time-discretization and spatial finite differences.
For piecewise constant timestepping, 40 policies were used, so that the computational time for the same mesh is about a factor of 40 larger
than for a single policy, hence we show slightly fewer refinements. 
It appears that the spatial approximation error for large mesh size $h$ is smaller if knowledge of the optimal control is used. The envelope showing the time discretization error is consistent with first order convergence. Interestingly, the intercept is about 4 units higher for the exact policy, so that the results using policy timestepping are about a factor 16 more accurate for the same timestep. This is not to be expected generally and must result from opposite signs of the Euler truncation error and the error due to piecewise constant policies.

\begin{figure}
\includegraphics[width=0.495\columnwidth]{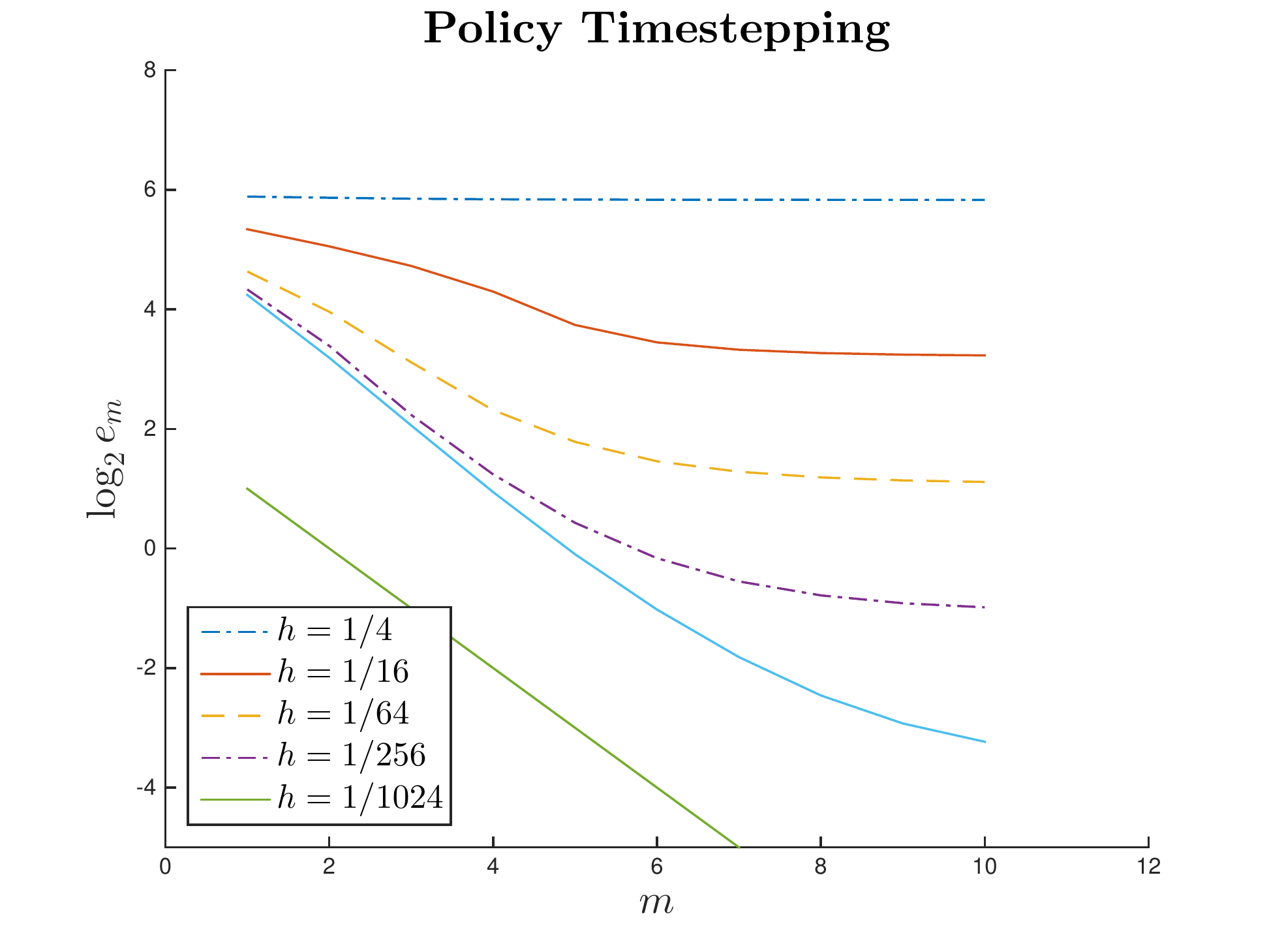}  \hfill
\includegraphics[width=0.495\columnwidth]{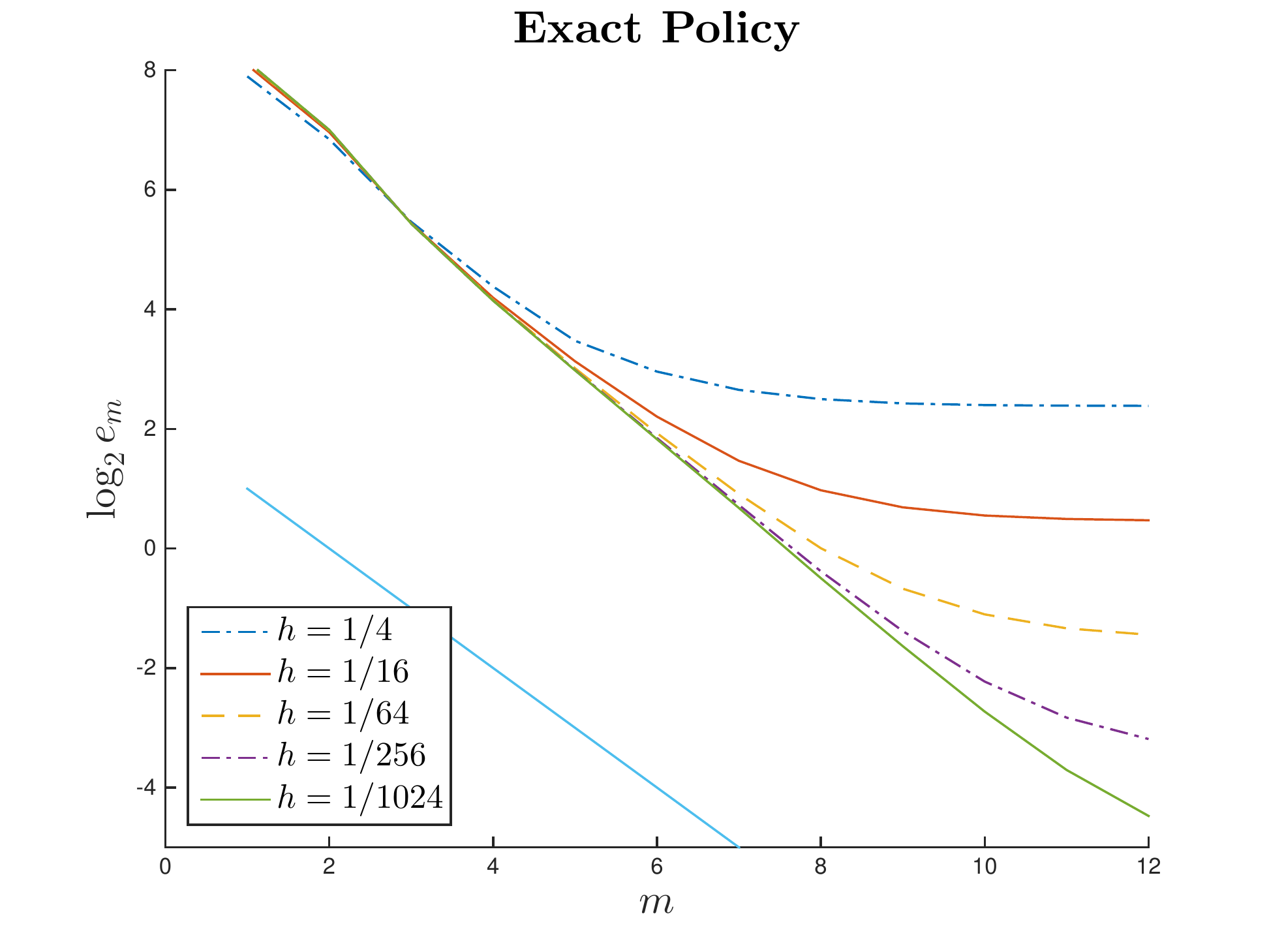}
\caption{
The mean-variance test case with parameters as in Table \ref{tab:parammeanvar}.
Similar to Fig.~\ref{fig:1dplots} in the previous section, the $\log_2$ error for $(\Delta \tau)_m=1/8 \cdot 2^{-m} \in \{1/8,\ldots, 1/32768\}$ and in each plot, from top to bottom, $h=1/4,1/16,1/64,1/256,1/1024$.
The straight line has slope $-1$.
Left: Piecewise constant policy timestepping with 40 equally spaced policies in $[-2.5, 3.5]$.
Right: Using the exact policy given by (\ref{optpolanalyt}).}
\label{fig:MVconv}
\end{figure}

%

\subsubsection*{No bankruptcy, bounded control}

If bankruptcy ($W<0$) is not allowed, the PDE (\ref{meanvar1d}--\ref{meanvarbc}) holds on $(0,\infty)$.
The boundary equation at $W=0$ is then
\begin{eqnarray}
\label{bdypde}
V_{\tau}(0,\tau) - \pi V_W(0,\tau) = 0,
\end{eqnarray}
see  \cite{forsythwang10} for a discussion.
For $\pi>0$ there is an outgoing characteristic (going backwards in time) so that no boundary condition is required and we can approximate (\ref{bdypde})
by upwind differences from interior mesh points. In fact, as we are switching to upwind differences locally whenever the monotonicity of the scheme is violated (see above and \cite{forsythwang08}), upwinding will always be used for small $W$ if $\pi>0$.

For bounded control with no short-selling, $P=[0,p_{max}]$ in (\ref{meanvar1d}). 
In the computations, we choose $p_{max}=1.5$ as an 
attained upper bound (the same used in \cite{forsythwang10}).  This would
correspond to a typical leverage constraint.
For large $W$, we use again the approximation (\ref{linpdesol}), with coefficients based on the asymptotic optimal control $p=0$
(see Fig.~\ref{fig:MVBddconv}).

The numerically computed value function (a closed-form solution is not available in this case) is shown in Fig.~\ref{fig:MVBddconv},
together with the asymptotic approximation for large $W$. Also shown is the numerically computed optimal control.

\begin{figure}
\includegraphics[width=0.495\columnwidth]{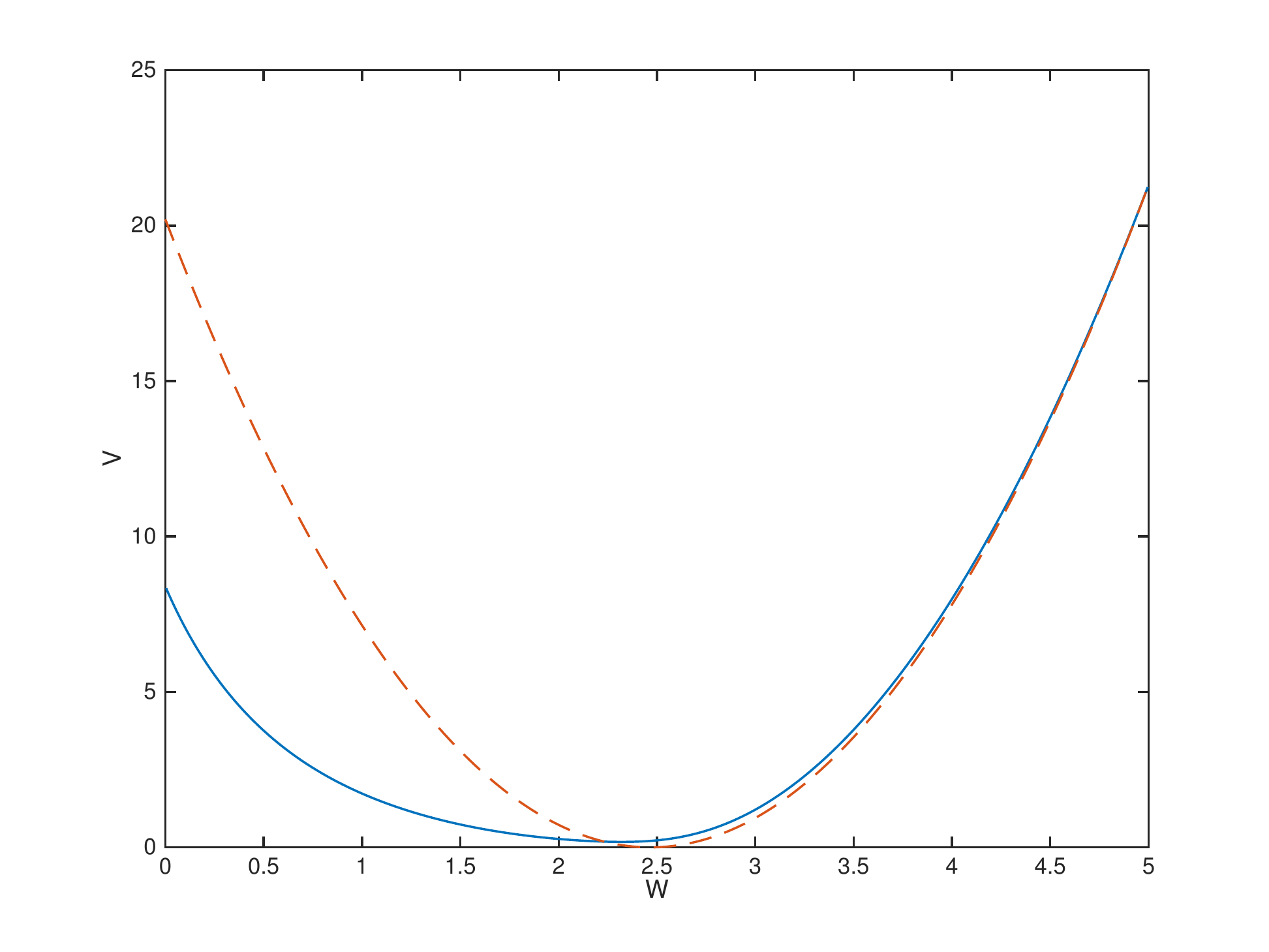}  \hfill
\includegraphics[width=0.495\columnwidth]{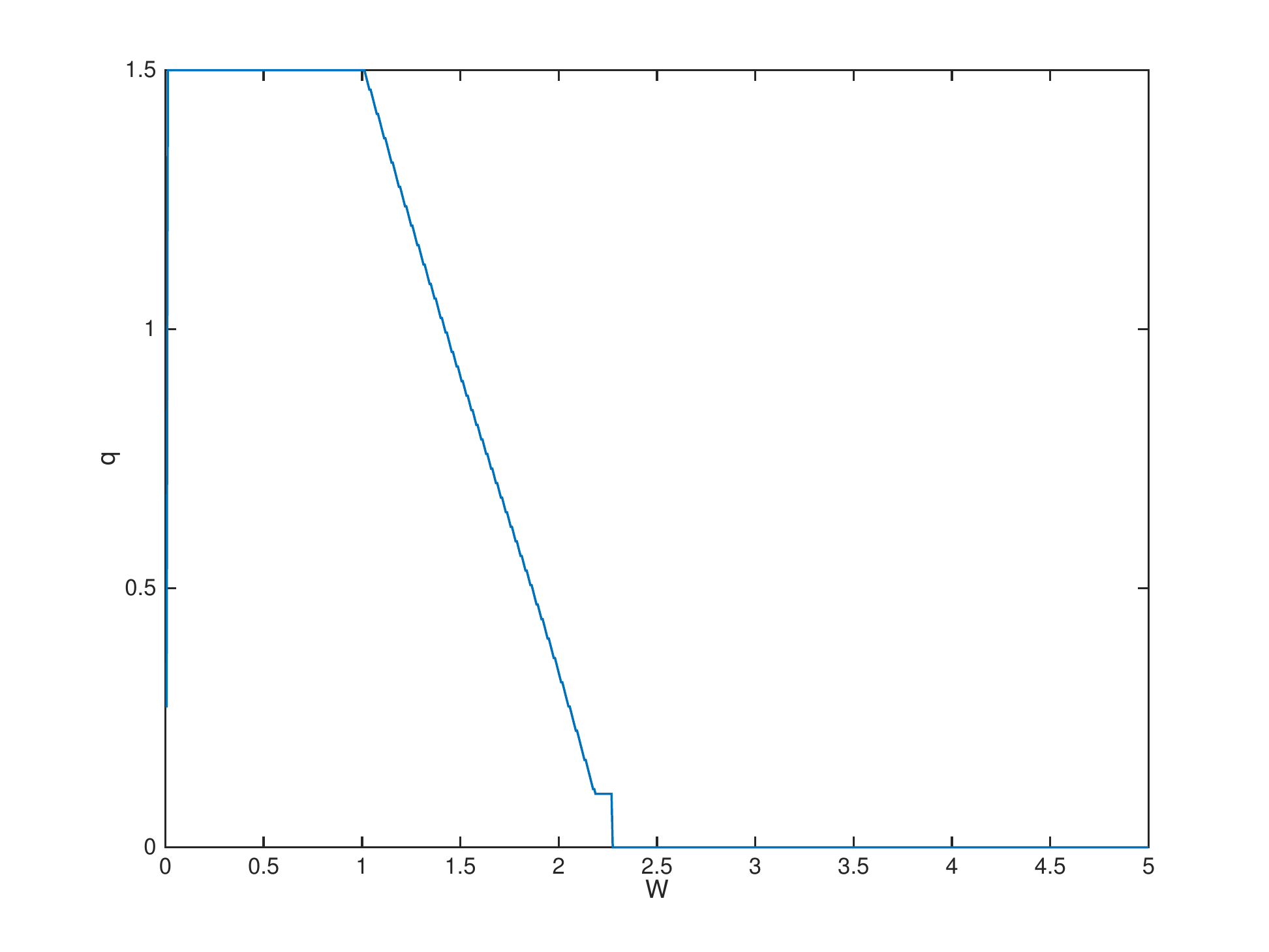}
\caption{
The mean-variance test case with parameters as in Table \ref{tab:parammeanvar},
with no bankruptcy and bounded control. 
 All model parameters are as in Table~\ref{tab:parammeanvar}.
Left: Value function $V(W,0)$ (solid line) and the asymptotic approximation (dashed line) for large $W$.
Right: The numerical optimal policy $p(W,0)$.}
\label{fig:MVBddconv}
\end{figure}

We compare the results achieved by piecewise policy timestepping to those achieved by the direct control formulation.
For clarity, the two discretizations used are
\begin{eqnarray}
\label{meanvardirect}
\frac{V^{n+1}-V^n}{\Delta \tau} - \min_{q \in Q_h}
L^h_{q} V^{n+1} &=& 0
\end{eqnarray}
for the direct control method and (\ref{meanvarpw}) for the piecewise constant timestepping method.

We use policy iteration as in \cite{bokanowski} to solve the discrete control problem in (\ref{meanvardirect}).
We use a positive coefficient discretization \cite{forsythwang08} with central differencing
used as much as possible.  For the direct control method,  and
the piecewise constant policy timestepping method,
it is straightforward to
verify that the discretization is monotone, consistent and stable \cite{forsythwang08,krylov99}.
The results are shown in Table \ref{tab:totaldiscrmeanvarbdd}.

\begin{table}
\centering
\begin{tabular}{|l|c|c|c|c|c|c|}
\hline
&& $M=800$ & $M=1600$ &$M=3200$ &$M=6400$ &$M=12800$ \\
&& $N = 50$ &  $N = 100$ & $N = 200$ & $N = 400$ & $N = 800$ \\
&& $J = 5$ & $J = 8$ & $J = 10$ & $J = 15$ & $J = 20$ \\
\hline
\hline
Policy & (a) & 1.5930 &   1.5589  &  1.5447  &  1.5378 &   1.5350 \\
timestepping & (b) &&   -0.0341  & -0.0141 &  -0.0069 &  -0.0028 \\
& (c) &&& 2.41   & 2.04  &  2.45 \\
\hline
Direct & (a) & 1.5902  &  1.5577  &  1.5442   & 1.5376 & $\star$ \\
control & (b) && -0.0326  & -0.0135 &  -0.0066 & $\star$ \\
& (c) &&& 2.4167  &  2.0333 & $\star$ \\
\hline
Fixed & (a) &3.4268 &   3.4199  &  3.4140  & 3.4104 & 3.4085 \\
control & (b) && -0.0069  & -0.0059 &  -0.0036 &  -0.0019 \\
($q=1.5$) & (c) &&&1.1733  &  1.6523  &  1.8399 \\
\hline
\end{tabular}
\caption{
The mean-variance test case with parameters as in Table \ref{tab:parammeanvar},
with no bankruptcy and bounded control. 
Shown are, for the policy timestepping method, the direct control method, and for a fixed constant control:
 (a) the numerical solution $V_k = \tilde{V}(N_k,M_k,J_k;W_0,0)$; (b) the increments $V_{k}-V_{k-1}$; 
 (c) the ratios $(V_{k}-V_{k-1})/(V_{k-1}-V_{k-2})$
 for 
 $M_k = 800 \cdot 2^{k-1}$,
 $N_k = 50 \cdot 2^{k-1}$,
 $J_k = \Big\lceil  5 \cdot \sqrt{2}^{k-1} \Big\rceil$
 (except for the fixed control case, where $J=1$).
}
\label{tab:totaldiscrmeanvarbdd}
\end{table}

In each step of the policy iteration, the maximum (over parameters) of the discretized differential operator at any given mesh-point has to be computed. As the discretization (local upwinding based on the coefficients) depends on the control parameter in a discontinuous way, this maximum is found by discretizing the control and  exhaustive search.
This makes the complexity of  a single policy iteration identical to a 
single timestep of the constant policy timestepping algorithm.
Thus, overall, the typically 
observed 4--6 iterations in every timestep 
translates into a 4--6 factor of 
increase in the CPU cost of the direct control method compared
to the piecewise constant policy timestepping technique. 
Due to this increased cost, we do not show the direct control results 
for the finest level (marked $\star$).

The refinements were chosen such that at the coarsest level a single separate refinement of the spatial mesh, timestep and control mesh gave comparable (empirical) accuracy improvements. This ensures that the data test the convergence order in all three discretization parameters.
It is clear that the achieved accuracy is almost identical for both methods.

We also include results for the value achieved with  a fixed control, $q=1.5$. 
This is the chosen upper bound and the optimal value attained in an interval 
around $W_0=1$, see Fig.~\ref{fig:MVBddconv}. The results are distinctly different from 
those under the optimal control, which shows that the similar performance 
of policy timestepping and direct control is not a result of the control being 
constant near $W=1$.
The errors for fixed control are purely due to the time and spatial finite 
difference discretization, and are slightly smaller than those observed in the true optimal control problems.

%
%
%
%
%
%
%
%
%
%
%
%
%
%
%
%
%
%

%
%
%
%
%
%
%
%



\section{Conclusions}
\label{sec:discuss}

This article analyzes the piecewise constant policy timestepping 
method both from a theoretical and an applications perspective.
Our main result is that if we use different meshes for each constant policy PDE solve,
then convergence to the viscosity solution can be
proven even if high order (not necessarily monotone) interpolation techniques are used. 
Essentially, this is because we can view the piecewise constant policy timestepping
method as the solution to a switching system of PDEs, where the coupling
between the PDEs occurs only in the zeroth order term.
However, this generality comes at a price: we must include
a finite switching cost in the switching system.  Convergence
to the solution of the original HJB PDE occurs only in the
limit as the switching cost tends to zero.  However,
our numerical experiments show  that good results
are obtained for very small (even zero) switching costs.

The general approach we follow
also has superficial similarities with the ``semi-Lagrangian methods'' (SLM) of \cite{camilli1995approximation} and \cite{debrabantjakobsen12}.
They both make use of the fact that for \emph{given} coefficients (controls), it may be easier to construct monotone schemes together with the underlying mesh, especially in more than one dimension.
If different controls require different meshes, interpolation of the mesh solution is needed in every timestep. In the present method this serves to carry out the optimization over solutions with different policies.

The computational results demonstrate that a smaller error
is obtained using the high order interpolation, compared
to linear interpolation. 

In many practical situations, the local optimization
problem at each node is determined by discretizing the
control and using exhaustive search.  In this case,
our tests  show that
piecewise constant policy timestepping is more efficient than 
standard direct control methods, as a similar level of accuracy
is achieved with less computational effort. This is simply due
to the fact that piecewise constant policy timestepping is
unconditionally stable, and does not require a policy
iteration to solve nonlinear discretized equations.

The use of piecewise constant policy timestepping
can be useful in situations where generic monotone schemes 
are hard to construct, e.g., in multidimensional settings, whose 
implementation we do not consider here and leave for future work.

Finally, we note that it is straightforward to implement piecewise constant
policy timestepping in existing linear PDE solution software.  Hence these
existing algorithms can be easily converted to solve nonlinear
HJB equations.

\appendix
\section{Proof of Lemma \ref{lipshitz_assump}}
\label{lemma1_proof}

We provide here a proof of Lemma \ref{lipshitz_assump},
\begin{proof}
By insertion one gets
\begin{eqnarray*}
   | F({\bf{x}}, \phi( {\bf{x}} ),  D \phi( {\bf{x}} ), D^2 \phi( {\bf{x}} )  ) 
      - F_h( {\bf{x}}, \phi({\bf{x}}) + \xi ,  D \phi( {\bf{x}} ), D^2 \phi( {\bf{x}} )  )   | 
      &=&
      \left|
      \sup_{q\in Q_h} L_q (\phi + \xi) - \sup_{q\in Q} L_q \phi
      \right|
      \\
& \hspace{-10 cm} \leq&   \hspace{-5 cm}  
            \left|
      \sup_{q\in Q_h} L_q (\phi + \xi) - \sup_{q\in Q} L_q (\phi + \xi)
      \right|
      +
            \left|
      \sup_{q\in Q} L_q (\phi + \xi) - \sup_{q\in Q} L_q \phi
      \right|
\end{eqnarray*}
by the triangle inequality.
From Assumption \ref{coeff_assump} and
the compactness of $Q$, then the supremum of $L_q \phi$ is attained, say at $q^*$, and then
\[
L_{q^*} \phi - r_{q^*} \xi = L_{q^*} (\phi + \xi) \le
 \sup_{q\in Q} L_q (\phi + \xi) \le  \sup_{q\in Q} L_q \phi + \sup_{q\in Q} L_q \xi
 = L_{q^*} \phi + \sup_{q\in Q} (-r_{q}) \xi,
\]
hence
\begin{eqnarray}
\label{firstpart}     
            \left|
      \sup_{q\in Q} L_q (\phi + \xi) - \sup_{q\in Q} L_q \phi
      \right|      
      &\le & \xi \sup_{q\in Q} |r_q|.
\end{eqnarray}

Now let $q_\xi^*$ be the maximizer of $L_q (\phi+\xi)$.
We also have by (\ref{denseset}) that there is $q_h^* \in Q_h$ with $|q_h^*-q_\xi^*|\le h$.
By uniform continuity of the coefficients in $q$ on the compact set $Q$, there exists a function
$\bar{\omega}_1$ so that
\[
\|\sigma_{q_\xi^*} \sigma_{q_\xi^*}^T
- \sigma_{q_h^*} \sigma_{q_h^*}^T
\| + \|\mu_{q_\xi^*}-\mu_{q_h^*}\| + |r_{q_\xi^*} -r_{q_h^*}| + |f_{q_\xi^*} - f_{q_h^*}| 
\le \bar{\omega}_1( {\bf{x}},h) \rightarrow 0, h\rightarrow 0
\]
with the usual vector and matrix norms, and hence
\[
\left|
L_{q_\xi^*} (\phi + \xi) - L_{q_h^*} (\phi + \xi)
\right|
\le
\bar{\omega}_1({\bf{x}},h)
\max\left(
1,
|\xi|,
|\phi|,
\|D \phi\|,
\|D^2 \phi\|
\right) \equiv \max(1,|\xi|) \bar{\omega}_2({\bf{x}},h)
\]
for a suitably defined $\bar{\omega}_2$.
Using also that $Q_h\subset Q$,
\[
\sup_{q\in Q} L_q (\phi + \xi) - \max(1,|\xi|) \bar{\omega}_2({\bf{x}}, h)
\le \sup_{q\in Q_h} L_q (\phi + \xi)
\le \sup_{q\in Q} L_q (\phi + \xi),
\]
so that
\begin{eqnarray}
\label{secpart}
\left|
      \sup_{q\in Q_h} L_q (\phi + \xi) - \sup_{q\in Q} L_q (\phi + \xi)
      \right| &\le&
      \max(1,|\xi|) \bar{\omega}_2({\bf{x}}, h)
      \; \le \; 
\bar{\omega}_2( {\bf{x}}, h) + \frac{\bar{\omega}_2^2({\bf{x}}, h)}{2} + \frac{\xi^2}{2}.
\end{eqnarray} 
From Assumption \ref{comparison_assump}, and  noting that the
      equation coefficients are uniformly
      continuous in $q$, it is easily shown
      that the right hand side
      of (\ref{secpart}) is locally Lipshitz in
      ${\bf{x}}$, independent of $h$.
The result then follows from (\ref{firstpart}) and (\ref{secpart}), with an appropriate choice of $\omega_h$ and $\omega_\xi$.

\end{proof}

\bibliography{uncvol,paper}

\begin{thebibliography}{10}

\bibitem{almgren2001}
R.~Almgren and N.~Chriss.
\newblock Optimal execution of portfolio transactions.
\newblock {\em Journal of Risk}, 3:5--40, 2001.

\bibitem{avellaneda95}
M.~Avellaneda, A.~Levy, and A.~Par\'{a}s.
\newblock Pricing and hedging derivative securities in markets with uncertain
  volatilities.
\newblock {\em Applied Mathematical Finance}, 2:73--88, 1995.

\bibitem{BarlesNotes1997}
G.~Barles.
\newblock Solutions de viscosit{\'e} et {\'e}quations elliptiques du
  dueuxi{\`e}me ordre.
\newblock Lecture notes, University of Tours, 1997.

\bibitem{barlesjakobsen02}
G.~Barles and E.R. Jakobsen.
\newblock On the convergence rate of approximation schemes for
  {H}amilton-{J}acobi-{B}ellman equations.
\newblock {\em ESAIM:M2AN}, 36(1):33--54, 2002.

\bibitem{barlesjakobsen05}
G.~Barles and E.R. Jakobsen.
\newblock Error bounds for monotone approximation schemes for
  {H}amilton-{J}acobi-{B}ellman equations.
\newblock {\em SIAM Journal on Numerical Analysis}, 43(2):540--558, 2005.

\bibitem{barlesjakobsen07}
G.~Barles and E.R. Jakobsen.
\newblock Error bounds for monotone approximation schemes for parabolic
  {H}amilton-{J}acobi-{B}ellman equations.
\newblock {\em Mathematics of Computation}, 76(240):1861--1893, 2007.

\bibitem{barlessouganidis}
G.~Barles and P.E. Souganidis.
\newblock Convergence of approximation schemes for fully nonlinear second order
  equations.
\newblock {\em Asymptotic Analysis}, 4(3):271--283, 1991.

\bibitem{Basak08}
S.~Basak and G.~Chabakauri.
\newblock Dynamic mean-variance asset allocation.
\newblock {\em Review of Financial Studies}, 23:2970--3016, 2010.

\bibitem{bokanowski}
O.~Bokanowski, S.~Maroso, and H.~Zidani.
\newblock Some convergence results for {H}oward's algorithm.
\newblock {\em SIAM Journal on Numerical Analysis}, 47(4):3001--3026, 2009.

\bibitem{boulbrachene2001finite}
M.~Boulbrachene and M.~Haiour.
\newblock The finite element approximation of {H}amilton-{J}acobi-{B}ellman
  equations.
\newblock {\em Computers \& Mathematics with Applications}, 41(7):993--1007,
  2001.

\bibitem{briani_2011}
A.~Briani, F.~Camilli, and H.~Zidani.
\newblock Approximation schemes for monotone systems of nonlinear second order
  differential equations: Convergence result and error estimate.
\newblock {\em Differential Equations and Applications}, 4:297--317, 2012.

\bibitem{burgard2011}
C.~Burgard and M.~Kjaer.
\newblock Partial differential equation representations of derivatives with
  bilateral counterparty risks and funding costs.
\newblock {\em The Journal of Credit Risk}, 7:Fall:75--93, 2011.

\bibitem{burgard2013}
C.~Burgard and M.~Kjaer.
\newblock Funding strategies, funding costs.
\newblock {\em Risk}, pages 82--87, December 2013.

\bibitem{camilli1995approximation}
F.~Camilli and M.~Falcone.
\newblock An approximation scheme for the optimal control of diffusion
  processes.
\newblock {\em Mod{\'e}lisation math{\'e}matique et analyse num{\'e}rique},
  29(1):97--122, 1995.

\bibitem{carmona09}
R.~Carmona, editor.
\newblock {\em Indifference Pricing}.
\newblock Princeton University Press, Princeton, 2009.

\bibitem{usersguide}
M.G. Crandall, H.~Ishii, and P.-L. Lions.
\newblock User's guide to viscosity solutions of second order partial
  differential equations.
\newblock {\em Bulletin of the AMS}, 27(1):1--67, 1992.

\bibitem{davis1990}
M.A. Davis and A.R. Norman.
\newblock Portfolio selection with transaction costs.
\newblock {\em Mathematics of Operations Research}, 15(4):676--713, 1990.

\bibitem{debrabantjakobsen12}
K.~Debrabant and E.R. Jakobsen.
\newblock Semi-{L}agrangian schemes for linear and fully non-linear diffusion
  equations.
\newblock {\em Mathematics of Computation}, 82(283):1433--1462, 2013.

\bibitem{fleming2006controlled}
W.H. Fleming and H.M. Soner.
\newblock {\em Controlled {M}arkov processes and viscosity solutions},
  volume~25.
\newblock Springer Science \& Business Media, 2006.

\bibitem{forsythlabahn}
P.A. Forsyth and G.~Labahn.
\newblock Numerical methods for controlled {H}amilton-{J}acobi-{B}ellman {PDE}s
  in finance.
\newblock {\em Journal of Computational Finance}, 11(2):1--44, 2007/2008.

\bibitem{fritsch}
F.N. Fritsch and R.E. Carlson.
\newblock Monotone piecewise cubic interpolation.
\newblock {\em SIAM Journal on Numerical Analysis}, 17:238--246, 1980.

\bibitem{hojgaardvigna}
B.~H{\o}jgaard and E.~Vigna.
\newblock {\em Mean-variance portfolio selection and efficient frontier for
  defined contribution pension schemes}.
\newblock Research Report Series. Department of Mathematical Sciences, Aalborg
  University, 2007.

\bibitem{ishii_1991}
H.~Ishii and S.~Koike.
\newblock Viscosity solutions for monotone systems of second order elliptic
  {PDEs}.
\newblock {\em Comm. Partial Differential Equations}, 16:1095--1128, 1991.

\bibitem{ishii_1991_a}
H.~Ishii and S.~Koike.
\newblock Viscosity solutions of a system of nonlinear elliptic {PDEs} arising
  in switching games.
\newblock {\em Funkcialaj Ekvacioj}, 34:143--155, 1991.

\bibitem{karatzas88}
I.~Karatzas.
\newblock On the pricing of {A}merican options.
\newblock {\em Applied Mathematics and Optimization}, 17(1):37--60, 1988.

\bibitem{krylov97}
N.V. Krylov.
\newblock On the rate of convergence of finite-difference approximations for
  {B}ellman's equations.
\newblock {\em Algebra and Analysis, St.~Petersburg Mathematical Journal},
  9(3):245--256, 1997.

\bibitem{krylov99}
N.V. Krylov.
\newblock Approximating value functions for controlled degenerate diffusion
  processes by using piece-wise constant policies.
\newblock {\em Electronic Journal of Probability}, 4(2):1--19, 1999.

\bibitem{krylov00}
N.V. Krylov.
\newblock On the rate of convergence of finite difference approximations for
  {B}ellman's equations with variable coefficients.
\newblock {\em Probability Theory and Related Fields}, 117:1--16, 2000.

\bibitem{ling00}
D.~Li and W.-L. Ng.
\newblock Optimal dynamic portfolio selection: multiperiod mean variance
  formulation.
\newblock {\em Mathematical Finance}, 10:387--406, 2000.

\bibitem{lyons95}
T.~Lyons.
\newblock Uncertain volatility and the risk-free synthesis of derivatives.
\newblock {\em Applied Mathematical Finance}, 2(2):117--133, 1995.

\bibitem{mercurio2013}
F.~Mercurio.
\newblock {Bergman, Piterbarg} and beyond: pricing derivatives under
  collateralization and differential rates.
\newblock Working paper, Bloomberg, 2013.

\bibitem{merton69}
R.C. Merton.
\newblock Lifetime portfolio selection under uncertainty: the continuous time
  case.
\newblock {\em Review of Economics and Statistics}, 51(3):247--257, 1969.

\bibitem{pooley}
D.M. Pooley.
\newblock {\em Numerical Methods for Nonlinear Equations in Option Pricing}.
\newblock PhD thesis, University of Waterloo, 2003.

\bibitem{pooley2002}
D.M. Pooley, P.A. Forsyth, and K.R. Vetzal.
\newblock Numerical convergence properties of option pricing {PDE}s with
  uncertain volatility.
\newblock {\em IMA Journal of Numerical Analysis}, 23:241--267, 2003.

\bibitem{Seydel2009}
R.C. Seydel.
\newblock Impulse control for jump-diffusions: viscosity solutions of
  quasi-variational inequalities and applications in bank risk management.
\newblock PhD Thesis, Leipzig University, 2009.

\bibitem{smears2014}
I.~Smears and E.~S{\"u}li.
\newblock Discontinuous {G}alerkin finite element approximation of
  {H}amilton--{J}acobi--{B}ellman equations with {C}ordes coefficients.
\newblock {\em SIAM Journal on Numerical Analysis}, 52(2):993--1016, 2014.

\bibitem{Wal1978}
J.~Van~Der Wal.
\newblock Discounted {M}arkov games: generalized policy iteration.
\newblock {\em Optimization Theory and Applications}, 25:125--138, 1978.

\bibitem{forsythwang08}
J.~Wang and P.A. Forsyth.
\newblock Maximal use of central differencing for {H}amilton-{J}acobi-{B}ellman
  {PDE}s in finance.
\newblock {\em {SIAM} Journal on Numerical Analysis}, 46:1580--1601, 2008.

\bibitem{forsythwang10}
J.~Wang and P.A. Forsyth.
\newblock Numerical solution of the {H}amilton-{J}acobi-{B}ellman formulation
  for continuous time mean variance asset allocation.
\newblock {\em Journal of Economic Dynanmics and Control}, 34:207--230, 2010.

\bibitem{wang2011}
J.~Wang and P.A. Forsyth.
\newblock Continous time mean variance asset allocation: a time consistent
  strategy.
\newblock {\em European Journal of Operational Research}, 209:184--201, 2011.

\bibitem{zhouli00}
X.~Zhou and D.~Li.
\newblock Continuous time mean variance portfolio selection: A stochastic {LQ}
  framework.
\newblock {\em Applied Mathematics and Optimization}, 42:19--33, 2000.

\end{thebibliography}
\bibliographystyle{plain}

\end{document}